\newtheorem{theorem}{Theorem}
\theoremstyle{plain}
\newtheorem{lemma}[theorem]{Lemma}
\newtheorem{prop}[theorem]{Proposition}
\newtheorem{cor}[theorem]{Corollary}
\newtheorem{question}[theorem]{Question}
\theoremstyle{definition}
\newtheorem{definition}[theorem]{Definition}
\theoremstyle{remark}
\newtheorem{rmk}[theorem]{Remark}
\newtheorem{example}[theorem]{Example}
\newcommand{\R}{\mathbb{R}}
\newcommand{\Z}{\mathbb{Z}}
\newcommand{\F}{\mathbb{F}}
\begin{document}

\begin{frontmatter}

\title{Relations among the kernels and images of {S}teenrod squares
  acting on right $\mathcal{A}$-modules}

\author[]{Shaun V. Ault}\ead{ault@fordham.edu}

\address{Department of Mathematics \\ 
         Fordham University \\ 
         Bronx, New York, 10461, USA.}

\begin{abstract}  
  In this note, we examine the right action of the Steenrod algebra
  $\mathcal{A}$ on the homology groups $H_*(BV_s, \F_2)$, where $V_s =
  \F_2^s$.  We find a relationship between the intersection of kernels
  of $Sq^{2^i}$ and the intersection of images of $Sq^{2^{i+1}-1}$,
  which can be generalized to arbitrary right $\mathcal{A}$-modules.
  While it is easy to show that $\bigcap_{i=0}^{k}
  \mathrm{im}\,Sq^{2^{i+1}-1} \subseteq \bigcap_{i = 0}^k
  \mathrm{ker}\,Sq^{2^i}$ for any given $k \geq 0$, the reverse
  inclusion need not be true.  We develop the machinery of homotopy
  systems and null subspaces in order to address the natural question
  of when the reverse inclusion can be expected.  In the second half
  of the paper, we discuss some counter-examples to the reverse
  inclusion, for small values of $k$, that exist in $H_*(BV_s, \F_2)$.
\end{abstract}

\begin{keyword}
  Hit Problem \sep Steenrod Algebra \sep Homotopy System

  \MSC[2010] 55S10 \sep 55R40 \sep 57T25
\end{keyword}

\end{frontmatter}


\section{Introduction}\label{sec.intro}

For $s \geq 1$, let $V_s = \F_2^s$, the elementary Abelian $2$-group
of rank $s$.  The action of the Steenrod algebra $\mathcal{A}$ on the
cohomology of $BV_s$ has been a topic of much study
(see~\cite{Wood98,Wood00,Nam04}, for example).  The problem of finding
a minimal generating set for $H^*\left(BV_s, \F_2 \right)$, known as
the ``hit problem,'' seems currently out of reach, though there are
complete answers in the cases $s \leq 4$
(see~\cite{Board,Kameko,Kameko03,Sum07}).  We examine the dual
problem: finding a basis of the space of $\mathcal{A}^+$-annihilated
elements of the homology of $BV_s$.  An element $x$ is
``$\mathcal{A}^+$-annihilated,'' if $xSq^p = 0$ for every $p > 0$.
Some important work has already been done on this dual problem ({\it
  e.g.}~\cite{ACH,CH,RP}).  Let $\Gamma$ be the bigraded space
$\{\Gamma_{s,*}\}_{s \geq 0}$, where $\Gamma_{s,*} = H_*\left(BV_s,
\F_2 \right)$.  We write $\Gamma_{\mathcal{A}^+}$ for the space of
$\mathcal{A}^+$-annihilated elements of $\Gamma$.  The problem may be
phrased thus:

\medskip

{\bf $\mathcal{A}^+$-Annihilated Problem.}  {\it Find an
  $\mathbb{F}_2$ basis for $\Gamma_{\mathcal{A}^+}$.}

\medskip

We find the $\mathcal{A}^+$-annihilated problem to be more natural
than the hit problem, since $\Gamma_{\mathcal{A}^+}$ admits an algebra
structure; indeed $\Gamma_{\mathcal{A}^+}$ is free as associative
algebra~\cite{A1}.  William Singer and the author proposed to study
the spaces of ``partially $\mathcal{A}^+$-annihilated'' elements, as
these may be more accessible.  It was found that these spaces are also
free associative algebras~\cite{AS}.  Recall some of the definitions
and notations used in~\cite{AS}.  The bigraded algebra
$\widetilde{\Gamma} = \{\widetilde{\Gamma}_{s,*}\}_{s \geq 0}$ is
defined by:
\[
  \widetilde{\Gamma}_{s, *} = \left\{\begin{array}{ll} \F_2,
  \;\textrm{concentrated in internal degree $0$},\;& \textrm{if $s =
    0$},\\ 
  \widetilde{H}_*\left((\R P^{\infty})^{\wedge s},
  \F_2\right)), & \textrm{if $s \geq 1$}.
  \end{array}\right.
\]
Observe, the homotopy equivalence $BV_s \simeq \prod^s \R P^{\infty}$
connects this definition with that of $\Gamma$ given above.  The
Steenrod algebra acts on the right of $\widetilde{\Gamma}$, and in
this note, all functions that we define will be written on the right
of their arguments in order to maintain consistency.  

Let $\mathcal{S}_k$ be the Hopf subalgebra of $\mathcal{A}$ generated
by $\{Sq^{2^i} \}_{i \leq k}$.  For each $k \geq 0$, define the
bigraded space of partially $\mathcal{A}^+$-annihilated elements,
$\Delta(k) \subseteq \widetilde{\Gamma}$, by
\[
  \Delta(k) = \widetilde{\Gamma}_{\mathcal{S}^+_k} = \bigcap_{i = 0}^k
  \mathrm{ker}\,Sq^{2^i}.
\]
The definition generalizes to arbitrary $\mathcal{A}$-modules,
whether unstable or not.
\begin{definition}\label{def.Delta_k}
  Let $M$ be a right $\mathcal{A}$-module and $k \geq 0$.  Define the
  graded space of {\it partially $\mathcal{A}^+$-annihilated} elements
  of $M$,
  \[
    \Delta_M(k) = M_{\mathcal{S}^+_k} = \bigcap_{i = 0}^k
    \mathrm{ker}\,Sq^{2^i}.
  \]
\end{definition}
Define also the graded spaces of {\it simultaneous spike images},
\begin{definition}
  Let $M$ be a right $\mathcal{A}$-module and $k \geq 0$.  The
  space of {\it simultaneous spike images} of $M$ is defined
  by
  \[
    I_M(k) = \bigcap_{i=0}^{k} \mathrm{im}\,Sq^{2^{i+1}-1}.
  \]
  When $M = \widetilde{\Gamma}$, the notation $I(k)$ is used.
\end{definition}
During the course of investigating the ``$\mathcal{A}^+$-annihilated
problem'' in $\widetilde{\Gamma}$, Singer and the author made a number
of (unpublished) conjectures that we later proved false.  One of the
most tantalizing and longest-lived of our conjectures concerned a
proposed relationship between $\mathcal{S}^+_k$-annihilateds and
simultaneous spike images, based on the observation,
\begin{prop}\label{prop.wall}
  Let $M$ be a right $\mathcal{A}$-module.  Then $I_M(k) \subseteq
  \Delta_M(k)$.
\end{prop}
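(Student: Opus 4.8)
The plan is to reduce the module-theoretic inclusion to a single identity inside the Steenrod algebra itself, namely
\[
  Sq^{2^{i+1}-1}\,Sq^{2^{i}} = 0 \qquad \text{in } \mathcal{A}, \text{ for every } i \geq 0.
\]
Granting this, the proposition is immediate: if $x \in I_M(k)$, then for each $i$ with $0 \leq i \leq k$ we may write $x = y\,Sq^{2^{i+1}-1}$ for some $y \in M$, and associativity of the right action gives $x\,Sq^{2^i} = \bigl(y\,Sq^{2^{i+1}-1}\bigr)Sq^{2^i} = y\,\bigl(Sq^{2^{i+1}-1}\,Sq^{2^i}\bigr) = 0$. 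Hence $x \in \bigcap_{i=0}^{k}\ker Sq^{2^i} = \Delta_M(k)$, which is exactly the claim. So everything comes down to the stated identity.

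To prove the identity I would apply the Adem relation for $Sq^a\,Sq^b$ in the admissibility range $a < 2b$. Here $a = 2^{i+1}-1$ and $b = 2^i$, and indeed $2^{i+1}-1 < 2^{i+1} = 2b$, so the relation applies and writes $Sq^{2^{i+1}-1}\,Sq^{2^i}$ as a sum of terms of the form $\binom{2^i - c - 1}{\,2^{i+1}-1-2c\,}\,Sq^{2^{i+1}-1+2^i-c}\,Sq^c$. I would then check that every binomial coefficient appearing is zero — as an integer, hence a fortiori mod $2$ — by observing that a surviving term requires both $0 \leq 2^{i+1}-1-2c$, which forces $c \leq 2^i - 1$, and $2^{i+1}-1-2c \leq 2^i - c - 1$, which forces $c \geq 2^i$. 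These constraints are incompatible, so the sum is empty and the identity holds.

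The binomial-coefficient check is the only real content here, and it is entirely routine; I do not anticipate a genuine obstacle. To finish, I would simply note that the argument of the first paragraph applies simultaneously for $i = 0, 1, \dots, k$, so intersecting the inclusions $\mathrm{im}\,Sq^{2^{i+1}-1} \subseteq \ker Sq^{2^i}$ yields $I_M(k) \subseteq \Delta_M(k)$. One may equivalently package the key fact as ``$Sq^{2n-1}\,Sq^{n} = 0$ for all $n \geq 1$'' and then specialize to $n = 2^i$; this form makes clear that the result depends only on the inadmissibility pattern of the product, not on any special dyadic feature of the degrees, but it leads to the same short proof.
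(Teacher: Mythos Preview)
Your proof is correct and follows exactly the paper's approach: the paper simply cites the Adem relation $Sq^{2n-1}Sq^n = 0$ for all $n \geq 1$ and declares the result immediate, while you spell out the binomial-coefficient check that verifies this relation and then apply it with $n = 2^i$. The arguments are identical in substance; yours just unpacks the one-line justification.
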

The proof is immediate from the Adem relations: $Sq^{2n-1}Sq^n = 0$,
for all $n \geq 1$.  It is natural to ask whether some version of the
converse of Prop.~\ref{prop.wall} could hold in $\widetilde{\Gamma}$.
Observe that if $x \in \widetilde{\Gamma}_{s,d}$ with $d < 2^{k+1}$,
then $x$ automatically lies in $\mathrm{ker}\,Sq^{2^k}$ due to
instability conditions in $\widetilde{\Gamma}_{s,*}$, so there is no
reason to suppose that such $x$ lie in the image of $Sq^{2^{k+1}-1}$.
However, in view of Prop.~\ref{prop.wall}, we may well ask:
\begin{question}\label{conj.imker}
  For each integer $k \geq 0$, if the internal degree of $x \in
  \widetilde{\Gamma}$ is at least $2^{k+1}$, then is it the case that
  $x \in \Delta(k)$ if and only if $x \in I(k)$?
\end{question}
The answer is {\it yes} when $k=0$, as shown in~\cite{AS}.  Moreover,
we had experimental evidence, including many hours of machine
computation, that supported a positive answer in general.  A
counterexample was first found for the case $k=2$, and subsequent work
was directed at proving the case $k=1$.  It wasn't until very recently
that the author produced a counterexample for $k=1$ in bidegree $(5,
9)$, and so the answer to Question~\ref{conj.imker} has to be {\it no}
for all $k \geq 1$.  In this note, we shall provide stronger
hypotheses that turn the question into a true theorem.  In order to
state and prove the result, we introduce (in
Section~\ref{sec.homotopy_systems}) the machinery of {\it homotopy
  systems} on right $\mathcal{A}$-modules.  In
Sections~\ref{sec.shift} and~\ref{sec.hom_sys_general}, we will define
a suitable homotopy system for $\widetilde{\Gamma}$ and use it and
some variations thereof to analyze a broad class class of
$\mathcal{A}^+$-annihilated elements in some important right
$\mathcal{A}$-modules (see Thm.~\ref{thm.im-ker_Gamma} for example).
Further observations about the structure of $\Delta_M(k)$ and $I_M(k)$
are considered in Section~\ref{sec.quotient}.
Section~\ref{sec.structure} contains some structure formulas for
elements of $\Delta(1)$, which lead to an explicit element $z \in
\Delta(1)_{5,9}$ that is not contained in $I(1)$ (we produce the
element in Section~\ref{sec.questions}).

\section{Homotopy Systems}\label{sec.homotopy_systems}

\subsection{Definition of Homotopy System.}

Let $M$ be a right $\mathcal{A}$-module.  Fix an integer $k \geq 0$. A
{\it $k^{th}$-order homotopy system} on $M$ consists of a subspace $N
\subseteq M$, called a {\it null subspace}, which may not necessary be
an $\mathcal{A}$-module, and for each integer $0 \leq m \leq k$, a
homomorphism of vector spaces $\psi^{2^m} : M \to M$ such that:
\begin{itemize}
  \item $N$ is stable under $\psi^{2^m}$.
  \item If $1 \leq m \leq k$ and $\ell < 2^m$, then there is
    a commutative diagram,
    \begin{equation}\label{eqn.comm_diag}
      \begin{diagram}
        \node{N}
        \arrow{e,t}{Sq^{\ell}}
        \arrow{s,l}{\psi^{2^m}}
        \node{M}
        \arrow{s,r}{\psi^{2^m}}
        \\
        \node{N}
        \arrow{e,t}{Sq^{\ell}}
        \node{M}
      \end{diagram}
    \end{equation}
  \item If $0 \leq m \leq k$, then there is a {\it homotopy
    relation},
    \begin{equation}\label{eqn.homotopy}
      \left( \psi^{2^m}Sq^{2^m} + Sq^{2^m}\psi^{2^m} \right) : N \to M
      \quad = \quad N \hookrightarrow M.
    \end{equation}
\end{itemize}
\begin{example}
  If $M$ is considered as a chain complex with differential $Sq^1$,
  and $N = M$, then a zeroth-order homotopy system on $M$ consists of
  a chain homotopy $\psi^1$ from the identity map to the zero map.
\end{example}
\begin{rmk}
  The null subspace $N$ is part of the definition, and there is no
  requirement that $N$ be the {\it maximal} subspace such that
  Eqns.~(\ref{eqn.comm_diag}),~(\ref{eqn.homotopy}) are valid with
  respect to it.  In fact, if $\{\psi^{2^m}\}_{m \leq k}$ is a
  $k^{th}$-order homotopy system on $M$ with null subspace $N$, then
  for any $j \leq k$, the set of maps $\{\psi^{2^m}\}_{m \leq j}$ is a
  $j^{th}$-order homotopy system on $M$ with the same null subspace
  $N$, although in practice we might be able to find a larger null
  subspace.
\end{rmk}

\subsection{The Main Theorem.}

\begin{theorem}\label{thm.imker}
  Let $M$ be a right $\mathcal{A}$-module and fix an integer $k \geq
  0$.  If there is a $k^{th}$-order homotopy system $\{\psi^{2^i}\}_{i
    \leq k}$ on $M$ with associated null subspace $N$, then
  \[
    N \cap \Delta_M(k) = N \cap I_M(k).
  \]
  Moreover, for $x \in N \cap \Delta_M(k)$ and for each $i \leq k$,
  the element
  \[
    y_i = x\psi^{2^{i}}\psi^{2^{i-1}}\cdots \psi^{2}\psi^1
  \]
  is a preimage of $x$ under $Sq^{2^{i+1}-1}$.
\end{theorem}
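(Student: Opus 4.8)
The plan is to prove both inclusions $N\cap I_M(k)\subseteq N\cap\Delta_M(k)$ and $N\cap\Delta_M(k)\subseteq N\cap I_M(k)$, where the first is essentially immediate from Proposition~\ref{prop.wall} and the stability of $N$, so the real content is the second inclusion together with the explicit preimage formula. First I would set up the key auxiliary claim: if $x\in N\cap\Delta_M(k)$, then for each $i\le k$ the element $y_i = x\psi^{2^i}\psi^{2^{i-1}}\cdots\psi^2\psi^1$ satisfies $y_i\in N$ and $y_i Sq^{2^{i+1}-1} = x$. Since $x\in N$ and $N$ is stable under each $\psi^{2^m}$, the membership $y_i\in N$ is automatic. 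The heart of the matter is the relation $y_i Sq^{2^{i+1}-1}=x$, which I would prove by induction on $i$.

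For the base case $i=0$: here $y_0 = x\psi^1$, and I want $x\psi^1 Sq^1 = x$. This follows from the homotopy relation~(\ref{eqn.homotopy}) with $m=0$, namely $\psi^1 Sq^1 + Sq^1\psi^1 = \mathrm{id}$ on $N$, once we observe that $xSq^1 = 0$ because $x\in\Delta_M(k)$ (as $Sq^1 = Sq^{2^0}$); hence $xSq^1\psi^1 = 0$ and $x\psi^1 Sq^1 = x$. For the inductive step, suppose $y_{i-1}Sq^{2^i - 1} = x$. I would compute $y_i Sq^{2^{i+1}-1}$ by first using the homotopy relation~(\ref{eqn.homotopy}) with $m=i$ applied to $y_{i-1}\in N$: $y_{i-1}\psi^{2^i}Sq^{2^i} + y_{i-1}Sq^{2^i}\psi^{2^i} = y_{i-1}$. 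The second term vanishes because $y_{i-1}Sq^{2^i}$ — wait, that is not obviously zero; rather the strategy is to factor $Sq^{2^{i+1}-1} = Sq^{2^i - 1}Sq^{2^i}$ using the Adem relation $Sq^{2^{i+1}-1} = Sq^{2^i-1}Sq^{2^i}$ (valid since $2^i - 1 < 2\cdot 2^i$, and this is the relevant admissibility identity), so that $y_i Sq^{2^{i+1}-1} = y_{i-1}\psi^{2^i}Sq^{2^i - 1}Sq^{2^i}$.

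Now I would invoke the commutative diagram~(\ref{eqn.comm_diag}) with $m=i$ and $\ell = 2^i - 1 < 2^i$, applied to $y_{i-1}Sq^{2^i}\in M$ — but more carefully, I want to move $\psi^{2^i}$ past $Sq^{2^i-1}$. The diagram says $Sq^{2^i-1}\psi^{2^i} = \psi^{2^i}Sq^{2^i-1}$ as maps $N\to M$ — careful with left/right conventions; since functions act on the right, the diagram~(\ref{eqn.comm_diag}) asserts that for $n\in N$ we have $n\psi^{2^i}Sq^{\ell} = nSq^{\ell}\psi^{2^i}$. So $y_{i-1}\psi^{2^i}Sq^{2^i-1}Sq^{2^i} = y_{i-1}Sq^{2^i-1}\psi^{2^i}Sq^{2^i}$. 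Then apply the homotopy relation~(\ref{eqn.homotopy}) with $m=i$ to the element $y_{i-1}Sq^{2^i-1}\in N$ (it lies in $N$ because $y_{i-1}\in N$ and... hmm, $N$ need not be $\mathcal{A}$-stable) — this is the point I must handle with care: I should instead write $\psi^{2^i}Sq^{2^i} = \mathrm{id} + Sq^{2^i}\psi^{2^i}$ on $N$ and apply it to the appropriate element, obtaining $y_{i-1}Sq^{2^i-1} + (\text{term involving }Sq^{2^i}\psi^{2^i})$, then show the extra term dies using $x\in\Delta_M(k)$ and the inductive hypothesis $y_{i-1}Sq^{2^i-1} = x$, so that the surviving term is $x$ while the correction term, after the reductions above, is a multiple of $xSq^{2^i} = 0$.

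The main obstacle I anticipate is bookkeeping: correctly threading the commutativity diagram and homotopy relation in the right order so that every intermediate element to which I apply relation~(\ref{eqn.homotopy}) or diagram~(\ref{eqn.comm_diag}) actually lies in $N$ (not merely in $M$), and ensuring the ``error terms'' that accumulate all reduce — via the Adem factorization $Sq^{2^{i+1}-1}=Sq^{2^i-1}Sq^{2^i}$ and the inductive identification of $y_{i-1}Sq^{2^i-1}$ with $x$ — to expressions containing a factor $xSq^{2^j}$ with $j\le k$, which vanishes since $x\in\Delta_M(k)$. Once the preimage formula $y_k Sq^{2^{k+1}-1}=x$ is established (and, iterating, $y_i Sq^{2^{i+1}-1}=x$ for every $i\le k$), it shows $x\in\mathrm{im}\,Sq^{2^{i+1}-1}$ for all $i\le k$, hence $x\in I_M(k)$, giving $N\cap\Delta_M(k)\subseteq N\cap I_M(k)$; combined with Proposition~\ref{prop.wall} this yields the equality $N\cap\Delta_M(k)=N\cap I_M(k)$, and the ``moreover'' clause is exactly the preimage formula just proved.
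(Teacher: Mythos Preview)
Your inductive scheme hides an unjustified step: you compute
\[
  y_i\,Sq^{2^{i+1}-1} \;=\; y_{i-1}\,\psi^{2^i}\,Sq^{2^i-1}Sq^{2^i},
\]
which presupposes $y_i = y_{i-1}\psi^{2^i}$.  But by definition $y_i = x\psi^{2^i}\psi^{2^{i-1}}\cdots\psi^1$ (the $\psi^{2^i}$ is applied \emph{first}, not last), whereas $y_{i-1}\psi^{2^i} = x\psi^{2^{i-1}}\cdots\psi^1\psi^{2^i}$.  The axioms of a homotopy system impose no commutation relations among the $\psi^{2^m}$, so these two elements need not agree.  Consequently your induction on $i$ does not establish the ``Moreover'' clause for the specific preimages $y_i$ named in the theorem.

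Your argument does, however, correctly prove that the \emph{reversed} products $\tilde y_i = x\psi^1\psi^2\cdots\psi^{2^i}$ satisfy $\tilde y_i\,Sq^{2^{i+1}-1}=x$: with $\tilde y_i = \tilde y_{i-1}\psi^{2^i}$, the commutativity diagram~(\ref{eqn.comm_diag}) gives $\tilde y_{i-1}\psi^{2^i}Sq^{2^i-1} = \tilde y_{i-1}Sq^{2^i-1}\psi^{2^i}$, and the inductive hypothesis identifies $\tilde y_{i-1}Sq^{2^i-1}$ with $x\in N$, after which relation~(\ref{eqn.homotopy}) and $xSq^{2^i}=0$ finish the job.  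So your approach does yield the equality $N\cap\Delta_M(k)=N\cap I_M(k)$, just with different explicit preimages than those stated.

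The paper handles the stated $y_i$ by a different induction: for fixed $i$ it inducts on an inner parameter $j$, showing
\[
  y_i\,Sq^{2^{i+1}-1} \;=\; x\psi^{2^i}\cdots\psi^{2^{j}}\,Sq^{2^{j}}\cdots Sq^{2^i},
\]
starting from $j=0$ via the factorization $Sq^{2^{i+1}-1}=Sq^1Sq^2\cdots Sq^{2^i}$.  At each step the homotopy relation is applied to $x\psi^{2^i}\cdots\psi^{2^{j+1}}\in N$ (stability of $N$ under the $\psi$'s), and the resulting $Sq^{2^j}$ is commuted leftward past $\psi^{2^i},\ldots,\psi^{2^{j+1}}$ using diagram~(\ref{eqn.comm_diag}), landing on $x$ where it vanishes.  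This ``inside-out'' peeling matches the given ordering of the $\psi$'s in $y_i$, which your ``outside-in'' induction does not.
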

\begin{proof}
  We fix a right $\mathcal{A}$-module $M$ and the integer $k$.
  Proposition~\ref{prop.wall} implies $N \cap I_M(k) \subseteq N \cap
  \Delta_M(k)$.  Now suppose $x \in N \cap \Delta_M(k)$.  We shall
  prove that the elements $y_i = x\psi^{2^{i}}\psi^{2^{i-1}}\cdots
  \psi^{2}\psi^1$ are in fact preimages of $x$ under $Sq^{2^{i+1}-1}$
  by first showing that for each $0 \leq j \leq i+1$,
  \begin{equation}\label{eqn.induction_goal}
    y_iSq^{2^{i+1}-1} = x\psi^{2^i}\psi^{2^{i-1}}\cdots
    \psi^{2^{j+1}}\psi^{2^{j}}Sq^{2^j}Sq^{2^{j+1}} \cdots
    Sq^{2^{i-1}}Sq^{2^i},
  \end{equation}
  where we understand this to mean $y_iSq^{2^{i+1}-1} = x$ in the case
  $j = i+1$.  The verification of Eqn.~(\ref{eqn.induction_goal}) is
  by induction on $j$.  When $j = 0$, Eqn.~(\ref{eqn.induction_goal})
  simply follows by definition of $y_i$ and relations within
  $\mathcal{A}$:
  \begin{eqnarray*}
    y_iSq^{2^{i+1}-1} &=& x\psi^{2^{i}}\psi^{2^{i-1}}\cdots
    \psi^{2}\psi^1Sq^{2^{i+1}-1}\\
    &=& x\psi^{2^{i}}\psi^{2^{i-1}}\cdots
    \psi^{2}\psi^1Sq^1Sq^2 \cdots Sq^{2^{i-1}}Sq^{2^{i}}
  \end{eqnarray*}
  
  Next, suppose Eqn.~(\ref{eqn.induction_goal}) is verified for all
  numbers up to $j$ for some $j \leq i$.
  \begin{eqnarray*}
    y_iSq^{2^{i+1}-1} &=& x\psi^{2^i}\cdots
    \psi^{2^{j+1}}\psi^{2^{j}}Sq^{2^j}Sq^{2^{j+1}} \cdots Sq^{2^i}\\
    &=& \left(\left[x\psi^{2^{i}}\cdots
      \psi^{2^{j+1}}\right]\psi^{2^j}Sq^{2^j}\right)Sq^{2^{j+1}}\cdots Sq^{2^i}\\
    &=& \left(\left[x\psi^{2^{i}}\cdots
      \psi^{2^{j+1}}\right]Sq^{2^j}\psi^{2^j} + x\psi^{2^{i}}\cdots
    \psi^{2^{j+1}}\right)Sq^{2^{j+1}}\cdots Sq^{2^i}\\
    &=& \left(\left[xSq^{2^j}\psi^{2^{i}}\cdots
      \psi^{2^{j+1}}\right]\psi^{2^j} + x\psi^{2^{i}}\cdots
    \psi^{2^{j+1}}\right)Sq^{2^{j+1}}\cdots Sq^{2^i}\\
    &=& \left(0 + x\psi^{2^{i}}\cdots
    \psi^{2^{j+1}}\right)Sq^{2^{j+1}}\cdots Sq^{2^i}\\
    &=& x\psi^{2^{i}}\cdots \psi^{2^{j+1}}Sq^{2^{j+1}}\cdots
    Sq^{2^i}\\
  \end{eqnarray*}
  Note, we used the fact that $x \in N$, which is closed under each
  map $\psi^{2^i}$, $\psi^{2^{i-1}}$, $\ldots$, $\psi^{2^{j}}$, in
  order to apply relations~(\ref{eqn.comm_diag})
  and~(\ref{eqn.homotopy}).  We also used the fact that $x \in
  \Delta_M(k)$ to get $xSq^{2^j} = 0$.  The upshot of
  Eqn.~(\ref{eqn.induction_goal}) is that when $j = i+1$, it gives us
  exactly what we wanted:
  \[
    y_iSq^{2^{i+1}-1} = x.
  \]
  Therefore, for each $0 \leq i \leq k$, we have $x \in \mathrm{im}
  Sq^{2^{i+1}-1}$ with preimage $y_i$ as specified in the statement of
  the theorem.
\end{proof}

\begin{rmk}
  Though there may be many preimages for $x$, the elements $y_i$ have
  the added property of being members of the null subspace $N$.
\end{rmk}

\section{Shift Maps}\label{sec.shift}

In this section, we find certain $k^{th}$-order homotopy systems for
various important right $\mathcal{A}$-modules.  These homotopy systems
are based on {\it shift maps}, which will be defined presently for
elements of $\widetilde{\Gamma}$.  Recall, the generators of
$\widetilde{\Gamma}_{s,*}$ are $s$-length monomials in the
non-commuting symbols $\{\gamma_i\}_{i \geq 1}$, where each $\gamma_i
\in \widetilde{H}_i(\R P^{\infty})$ is the canonical generator.  For
convenience and readability in formulas, we generally denote a
monomial $\gamma_{a_1}\gamma_{a_2} \cdots \gamma_{a_s} \in
\widetilde{\Gamma}_{s,*}$ by $[a_1, a_2, \ldots, a_s]$.  For each $s
\geq 1$, $d \geq 0$, $r \geq 0$, and $1 \leq i \leq s$, define the
$i^{th}$-place shift maps, $\psi_i^{r} : \widetilde{\Gamma}_{s,d} \to
\widetilde{\Gamma}_{s,d+r}$ on generators by:
\[
  [a_1, a_2, \ldots, a_s]\psi_i^{r} = [a_1, a_2, \ldots, a_i + r,
    \ldots, a_s].
\]
We will find that (for a fixed $i$), the set of shifts
$\{\psi_i^{2^m}\}_{m \leq k}$ forms a $k^{th}$-order homotopy system
with a rather large null subspace.  We start by showing there are
certain commutation relations in $\widetilde{\Gamma}_{1,*}$.  The
binomial coefficient $\binom{a}{b}$ is taken modulo $2$, and unless
otherwise stated, $\binom{a}{b} = 0$ if either $a < 0$ or $b < 0$.  As
a consequence of Lucas' Theorem,
\begin{equation}\label{eqn.lucas}
  \binom{a}{2^n} = \binom{b}{2^n}, \quad
  \textrm{if $a, b \geq 0$ and $a \equiv b \;(\textrm{mod}\; 2^{n+1})$.}
\end{equation}

\begin{lemma}\label{lem.shift_commutation_1}
  For $m > n \geq 0$, if $a \geq 2^{n}$, 
  \[
    [a]\psi_1^{2^m}Sq^{2^n} = [a]Sq^{2^n}\psi_1^{2^m}
  \]
\end{lemma}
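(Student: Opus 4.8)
The plan is a direct computation using the known (dual) action of the Steenrod algebra on $\widetilde{H}_*(\R P^\infty;\F_2)$. First I would recall that, dualizing the cohomology formula $Sq^k(x^b) = \binom{b}{k}\,x^{b+k}$, one obtains in bracket notation $[b]Sq^k = \binom{b-k}{k}\,[b-k]$ for the canonical generator $[b] = \gamma_b$, where the conventions $[c] = 0$ for $c < 0$ and $\binom{b-k}{k} = 0$ for $b < k$ are compatible. In particular $[a]Sq^{2^n} = \binom{a-2^n}{2^n}\,[a-2^n]$.

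Next I would expand each side of the asserted identity. On the left the shift is applied first, $[a]\psi_1^{2^m} = [a+2^m]$, and then $[a+2^m]Sq^{2^n} = \binom{a+2^m-2^n}{2^n}\,[a+2^m-2^n]$. On the right the square is applied first, $[a]Sq^{2^n} = \binom{a-2^n}{2^n}\,[a-2^n]$, and then the shift gives $\binom{a-2^n}{2^n}\,[a-2^n+2^m]$. Since $a+2^m-2^n = (a-2^n)+2^m$, the monomial appearing on each side is the same, so the lemma reduces to the scalar identity $\binom{a+2^m-2^n}{2^n} = \binom{a-2^n}{2^n}$.

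To finish I would invoke Eqn.~(\ref{eqn.lucas}). Because $m > n$ we have $2^m \equiv 0 \pmod{2^{n+1}}$, hence $a+2^m-2^n \equiv a-2^n \pmod{2^{n+1}}$; the hypothesis $a \geq 2^n$ ensures $a-2^n \geq 0$, and then $a+2^m-2^n \geq a-2^n \geq 0$ as well, so both arguments are nonnegative and Eqn.~(\ref{eqn.lucas}) applies to yield the equality of the two binomial coefficients, completing the proof.

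I do not expect a genuine obstacle here; the one point requiring care is lining up the non-negativity conventions for $\binom{\cdot}{\cdot}$ and $[\cdot]$ so that the degenerate cases where $a$ is close to $2^n$ behave correctly — this is exactly what the hypothesis $a \geq 2^n$ guarantees. An equivalent way to package the argument is to note at the outset that both composites $\psi_1^{2^m}Sq^{2^n}$ and $Sq^{2^n}\psi_1^{2^m}$ carry $[a]$ into a scalar multiple of the single monomial $[a+2^m-2^n]$, so that everything reduces to comparing those two scalars.
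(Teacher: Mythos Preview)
Your proof is correct and follows essentially the same route as the paper: expand both sides, observe they land on the same monomial $[a+2^m-2^n]$, and reduce to the binomial identity $\binom{a+2^m-2^n}{2^n}=\binom{a-2^n}{2^n}$, which is obtained from Eqn.~(\ref{eqn.lucas}) using $m>n$ and the hypothesis $a\geq 2^n$ to guarantee nonnegativity.
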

\begin{proof}
  Consider the expression on the left hand side:
  \begin{eqnarray}
    [a]\psi_1^{2^m}Sq^{2^n} &=& [a+2^m]Sq^{2^n}\\
    \label{eqn.second_binom}
     &=& \binom{a + 2^m - 2^n}{2^n}[a + 2^m - 2^n].
  \end{eqnarray}

  Since $m \geq n+1$, $a + 2^m - 2^n \equiv a - 2^n
  \;\mathrm{mod}\;2^{n+1}$.  By hypothesis, $a-2^n \geq 0$, and so by
  formula~(\ref{eqn.lucas}), there is equality $\binom{a+2^m-2^n}{2^n}
  = \binom{a-2^n}{2^n}$.

  \begin{eqnarray}
    [a]\psi_1^{2^m}Sq^{2^n} &=& \binom{a - 2^n}{2^n}[a+2^m - 2^n]\\
    &=& \binom{a-2^n}{2^n} [a - 2^n]\psi_1^{2^m}\\    
    &=& [a]Sq^{2^n}\psi_1^{2^m} 
  \end{eqnarray}
\end{proof}

\begin{cor}\label{cor.shift_comm_1}
  If $2^m > \ell \geq 0$ and $a \geq 2^{m-1}$,
  \[
    [a]\psi_1^{2^m}Sq^{\ell} = [a]Sq^{\ell}\psi_1^{2^m}.
  \]
\end{cor}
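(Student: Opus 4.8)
The plan is to deduce the corollary from Lemma~\ref{lem.shift_commutation_1} by first writing $Sq^{\ell}$ in terms of the generators $Sq^{2^0},\ldots,Sq^{2^{m-1}}$ of $\mathcal{A}$ and then sliding the shift $\psi_1^{2^m}$ past those generators one square at a time. The reduction step is this: since the classes $Sq^{2^i}$ ($i\ge 0$) generate $\mathcal{A}$, we may write $Sq^{\ell}$ as a finite $\F_2$-sum of monomials $Sq^{2^{i_1}}Sq^{2^{i_2}}\cdots Sq^{2^{i_r}}$, and for any monomial occurring in such an expression the internal degrees must match, $2^{i_1}+2^{i_2}+\cdots+2^{i_r}=\ell<2^m$, which forces every exponent $i_s\le m-1$. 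So it suffices to prove $[a]\psi_1^{2^m}\theta=[a]\theta\,\psi_1^{2^m}$ for a single monomial $\theta=Sq^{2^{i_1}}\cdots Sq^{2^{i_r}}$ with all $i_s<m$, and then sum over the monomials in the chosen expression for $Sq^{\ell}$.

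For a fixed such $\theta$ I would push $\psi_1^{2^m}$ rightward across $\theta$ one factor at a time. The first step is $[a]\psi_1^{2^m}Sq^{2^{i_1}}=[a]Sq^{2^{i_1}}\psi_1^{2^m}$, which is precisely Lemma~\ref{lem.shift_commutation_1} applied with $n=i_1$ (legitimate since $i_1<m$ and $a\ge 2^{i_1}$ by the hypothesis on $a$). Now $[a]Sq^{2^{i_1}}$ is a scalar in $\F_2$ times $[a-2^{i_1}]$, so, provided that scalar is nonzero, we are reduced to the same problem for the generator $[a-2^{i_1}]$ and the shorter monomial $Sq^{2^{i_2}}\cdots Sq^{2^{i_r}}$, and we iterate; after all $r$ factors have been passed, $\psi_1^{2^m}$ lands on the far right and we have arrived at $[a]\theta\,\psi_1^{2^m}$. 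Equivalently, one may avoid the running commutation and simply expand both $[a]\psi_1^{2^m}\theta$ and $[a]\theta\,\psi_1^{2^m}$ on the generator $[a]$ using $[b]Sq^{2^i}=\binom{b-2^i}{2^i}[b-2^i]$ and $[b]\psi_1^{2^m}=[b+2^m]$: each side becomes a product of binomial coefficients times the single basis element $[a+2^m-\ell]$, and the two products agree factor by factor because, by~(\ref{eqn.lucas}) together with $i_s+1\le m$, adding $2^m$ to a nonnegative upper entry of $\binom{\,\cdot\,}{2^{i_s}}$ leaves it unchanged modulo $2$.

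The delicate point, and the only real obstacle, is the bookkeeping of degenerate terms. In order to keep applying Lemma~\ref{lem.shift_commutation_1}, that is, to keep the relevant binomial upper entries nonnegative, the entry reached after passing $Sq^{2^{i_1}},\ldots,Sq^{2^{i_{s-1}}}$, namely $a-(2^{i_1}+\cdots+2^{i_{s-1}})$, must still be at least $2^{i_s}$; establishing this inequality, using $2^{i_1}+\cdots+2^{i_r}=\ell$ and the assumed lower bound on $a$, is exactly where that bound is consumed. Whenever some binomial coefficient does vanish along the way, the expansion above shows $[a]\theta=0$, so both sides of the asserted identity vanish from that point on and nothing is lost. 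Assembling these observations for each monomial $\theta$ and summing over the decomposition of $Sq^{\ell}$ then yields the corollary.
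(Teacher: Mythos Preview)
Your strategy—write $Sq^\ell$ in terms of the generators $Sq^{2^i}$ ($i<m$) of $\mathcal{S}_{m-1}$ and slide $\psi_1^{2^m}$ across one factor at a time via Lemma~\ref{lem.shift_commutation_1}—is exactly the paper's two-line argument, only made explicit. You have also correctly located the one nontrivial step: to iterate Lemma~\ref{lem.shift_commutation_1} one needs the running index $a-(2^{i_1}+\cdots+2^{i_{s-1}})$ to remain at least $2^{i_s}$.

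Unfortunately the hypothesis $a\ge 2^{m-1}$ does \emph{not} guarantee this, and your fallback claim (``whenever some binomial coefficient vanishes \ldots\ both sides vanish'') is false: vanishing of $[a]\theta$ in no way forces vanishing of $[a+2^m]\theta$. Take $m=2$, $\ell=3$, $a=2$, so that $a=2^{m-1}$ on the nose. Then $[2]Sq^3\psi_1^4=0$ since $[2]Sq^3=0$, whereas $[2]\psi_1^4 Sq^3=[6]Sq^3=\binom{3}{3}[3]=[3]\neq 0$. Thus the corollary is actually false under the stated hypothesis, and the gap cannot be closed by bookkeeping; the paper's terse proof glosses over the same point. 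What does work is the stronger hypothesis $a\ge\ell$: then for every $s$ one has
\[
  a-(2^{i_1}+\cdots+2^{i_{s-1}})\ \ge\ \ell-(2^{i_1}+\cdots+2^{i_{s-1}})
  \ =\ 2^{i_s}+\cdots+2^{i_r}\ \ge\ 2^{i_s},
\]
and your sliding argument goes through without exception. This stronger bound is in fact available at every downstream use (in Prop.~\ref{prop.hom_sys_Gamma} one has $a_i\ge 2^k\ge 2^m>\ell$), so the main results are unaffected; only the hypothesis of the corollary itself needs repair, presumably to $a\ge 2^m-1$.
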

\begin{proof}
  Since $\ell < 2^m$, we have $Sq^{\ell} \in \mathcal{S}_{m - 1}$.  So
  $Sq^{\ell}$ can be written as a sum of products of ``2-power''
  squares $Sq^{2^{i}}$ such that $i < m$, each of which commutes with
  $\psi_1^{2^m}$ by Lemma~\ref{lem.shift_commutation_1}.
\end{proof}

We will have occasion to use a more precise formula for the binomial
coefficients:
\begin{equation}\label{eqn.binom}
  \textrm{For $a \geq 0$,} \quad \binom{a}{2^n} = 
  \left\{ \begin{array}{ll} 0, \;\mathrm{if} \; a
    \equiv 0, 1, \ldots, 2^n-1 \; (\mathrm{mod}\; 2^{n+1}),\\ 1,
    \;\mathrm{if} \; a \equiv 2^n, 2^n+1, \ldots, 2^{n+1}-1
    \;(\mathrm{mod}\; 2^{n+1}).\\
  \end{array}\right.
\end{equation}
  
\begin{lemma}\label{lem.shift_homotopy_1}
  For $m \geq 0$, if $a \geq 2^{m}$, 
  \[
    [a]\psi_1^{2^m}Sq^{2^m} + [a]Sq^{2^m}\psi_1^{2^m} = [a].
  \]
\end{lemma}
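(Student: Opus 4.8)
The plan is to expand both terms on the left using the action of $Sq^{2^m}$ on $\widetilde{\Gamma}_{1,*}$ and reduce the statement to a single congruence modulo $2$ among binomial coefficients, which will then follow from~(\ref{eqn.binom}). Recall (cf.~the computation leading to~(\ref{eqn.second_binom})) that $[c]Sq^{2^m} = \binom{c-2^m}{2^m}[c-2^m]$ for any $c$. First I would compute the two summands separately. For the first, $[a]\psi_1^{2^m}Sq^{2^m} = [a+2^m]Sq^{2^m} = \binom{a}{2^m}[a]$. For the second, $[a]Sq^{2^m}\psi_1^{2^m} = \binom{a-2^m}{2^m}[a-2^m]\psi_1^{2^m} = \binom{a-2^m}{2^m}[a]$, where the hypothesis $a \geq 2^m$ guarantees $a - 2^m \geq 0$, so that $[a-2^m]$ is a genuine generator rather than being killed by the sign convention on binomial coefficients. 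Adding, the left-hand side equals $\left(\binom{a}{2^m} + \binom{a-2^m}{2^m}\right)[a]$, and it suffices to prove that $\binom{a}{2^m} + \binom{a-2^m}{2^m} \equiv 1 \pmod 2$ whenever $a \geq 2^m$.

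For this congruence I would argue by cases on the residue $r := a \bmod 2^{m+1}$, using~(\ref{eqn.binom}), which records that $\binom{c}{2^m}$ depends only on $c \bmod 2^{m+1}$ and is $1$ precisely when that residue lies in $\{2^m, 2^m+1, \dots, 2^{m+1}-1\}$. If $r \geq 2^m$, then $\binom{a}{2^m} = 1$, while $a - 2^m \equiv r - 2^m \pmod{2^{m+1}}$ with $0 \leq r - 2^m < 2^m$, so $\binom{a-2^m}{2^m} = 0$ and the sum is $1$. If instead $r < 2^m$, then $\binom{a}{2^m} = 0$; moreover $a \geq 2^m$ together with $r < 2^m$ forces $a \geq 2^{m+1}$, hence $a - 2^m \geq 2^m \geq 0$ and $a - 2^m \equiv r + 2^m \pmod{2^{m+1}}$ with $2^m \leq r + 2^m < 2^{m+1}$, so $\binom{a-2^m}{2^m} = 1$ and the sum is again $1$. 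In both cases the coefficient of $[a]$ is $1$, which is exactly the claim.

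There is no genuine obstacle here; the only point needing care is the second case, where one must observe that the hypothesis $a \geq 2^m$ combined with a residue $r < 2^m$ actually implies the stronger bound $a \geq 2^{m+1}$, which is what keeps $a - 2^m$ nonnegative and lets~(\ref{eqn.binom}) be applied to it. (Conceptually, the identity $\binom{a}{2^m} + \binom{a-2^m}{2^m} = 1$ just says that exactly one of $a$ and $a - 2^m$ has a $1$ in its $m$-th binary digit: subtracting $2^m$ either clears that digit without a borrow, or induces a borrow that sets it.)
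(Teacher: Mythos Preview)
Your proof is correct and follows essentially the same route as the paper: both compute the two summands to obtain $\left(\binom{a}{2^m}+\binom{a-2^m}{2^m}\right)[a]$ and then invoke~(\ref{eqn.binom}) to see the coefficient is $1$. The only difference is cosmetic: the paper simply cites~(\ref{eqn.binom}) and leaves the parity check implicit, whereas you have written out the two-case residue analysis explicitly (and added the nice binary-digit interpretation at the end).
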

\begin{proof}
  The proof is a straightfoward computation:
  \begin{eqnarray}
    \lefteqn{[a]\psi_1^{2^m}Sq^{2^m} + [a]Sq^{2^m}\psi_1^{2^m}} 
    \nonumber\\
    &=& [a+2^m]Sq^{2^m} + \binom{a-2^m}{2^m} [a-2^m]\psi_1^{2^m}\\
    \label{eqn.sum_binom}
    &=& \binom{a}{2^m}[a]+ \binom{a-2^m}{2^m}[a].
  \end{eqnarray}
  By formula~(\ref{eqn.binom}), the expression
  in~(\ref{eqn.sum_binom}) is equal to $[a]$.
\end{proof}

These results will serve to prove that, under some conditions, the
same commutation relations hold in $\widetilde{\Gamma}_{s,*}$ for any
$s \geq 1$.
\begin{lemma}\label{lem.shift_commutation}
  If $2^m > \ell \geq 0$ and $a_i \geq 2^{m-1}$ in $x = [a_1, \ldots,
    a_i, \ldots, a_s]$, then
  \[
    x\psi_i^{2^m}Sq^{\ell} = xSq^{\ell}\psi_i^{2^m}.
  \]
\end{lemma}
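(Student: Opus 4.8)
The plan is to reduce the multi-variable statement in $\widetilde{\Gamma}_{s,*}$ to the single-variable statement already established in Corollary~\ref{cor.shift_comm_1}. The key observation is that both $\psi_i^{2^m}$ and the action of any $Sq^\ell$ respect the tensor-product structure of $\widetilde{\Gamma}_{s,*} \cong \widetilde{\Gamma}_{1,*}^{\otimes s}$ via the Cartan formula: $\psi_i^{2^m}$ acts only in the $i^{th}$ tensor factor and as the identity elsewhere, while $xSq^\ell = \sum [a_1]Sq^{\ell_1} \otimes \cdots \otimes [a_s]Sq^{\ell_s}$, the sum taken over $\ell_1 + \cdots + \ell_s = \ell$. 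So first I would write down the Cartan expansion of $xSq^\ell$ explicitly, then apply $\psi_i^{2^m}$ and $Sq^\ell$ to each side term-by-term.

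Concretely, working one monomial summand at a time, $x\psi_i^{2^m}Sq^\ell$ expands (by Cartan, since $\psi_i^{2^m}$ only raises the $i^{th}$ entry) as a sum over compositions $(\ell_1,\dots,\ell_s)$ of $\ell$ of the terms $[a_1]Sq^{\ell_1}\otimes\cdots\otimes\bigl([a_i+2^m]Sq^{\ell_i}\bigr)\otimes\cdots\otimes[a_s]Sq^{\ell_s}$, whereas $xSq^\ell\psi_i^{2^m}$ expands as the sum of $[a_1]Sq^{\ell_1}\otimes\cdots\otimes\bigl([a_i]Sq^{\ell_i}\psi_1^{2^m}\bigr)\otimes\cdots\otimes[a_s]Sq^{\ell_s}$. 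The two sums have matching index sets, so it suffices to check, for each composition, that the $i^{th}$ factors agree: $[a_i]\psi_1^{2^m}Sq^{\ell_i} = [a_i]Sq^{\ell_i}\psi_1^{2^m}$. Since $\ell_i \le \ell < 2^m$, and $a_i \geq 2^{m-1}$ by hypothesis, this is exactly the content of Corollary~\ref{cor.shift_comm_1}. The outer factors are untouched on both sides, so the identity follows summand-by-summand, hence for $x$.

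The main thing to be careful about — and the only real obstacle — is bookkeeping around degeneracy: when some $\ell_j < 0$ is not an issue (those terms are simply absent), but one should note that $\psi_1^{2^m}$ applied to a scalar multiple $\binom{\cdot}{\cdot}[a_i-\ell_i']$-type term still commutes correctly, i.e. that the binomial coefficients produced by $Sq^{\ell_i}$ are inert under the shift. This is automatic because $\psi_1^{2^m}$ is $\F_2$-linear and acts on basis monomials, so it passes through the coefficients; Corollary~\ref{cor.shift_comm_1} already packages this. One should also confirm that the hypothesis $a_i \ge 2^{m-1}$ (rather than the sharper $a_i \ge 2^m$ needed only for the homotopy relation, Lemma~\ref{lem.shift_homotopy_1}) is indeed what Corollary~\ref{cor.shift_comm_1} requires — it is. With those remarks in place, the proof is a direct application of the Cartan formula together with the $s=1$ case, with no further computation needed.
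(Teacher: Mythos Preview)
Your proposal is correct and follows essentially the same approach as the paper: reduce to the $s=1$ case via the Cartan formula and invoke Corollary~\ref{cor.shift_comm_1}, since each $\ell_i \leq \ell < 2^m$. The only cosmetic difference is that the paper first reduces to $i=1$ by the $\Sigma_s$-symmetry and then uses the two-fold Cartan split $x=[a_1]\cdot z$, whereas you apply the full $s$-fold Cartan expansion directly at position $i$; the content is identical.
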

\begin{proof}
  Since $Sq^{\ell}$ commutes with the action of the symmetric group
  $\Sigma_{s}$, and there is also a relation, $\psi_i^{r}\sigma =
  \sigma \psi_{\sigma(i)}^r$, for $\sigma \in \Sigma_{s}$, it is
  sufficient to prove the lemma for $i = 1$.
  Cor.~\ref{cor.shift_comm_1} proves the case $s = 1$, so assume $s >
  1$ and write $x = [a_1]\cdot z$, where $z = [a_2, \ldots, a_s]$.
  \begin{eqnarray}
    ([a_1] \cdot z)Sq^{\ell}\psi_1^{2^m}
    \label{eqn.cartan}
    &=& \sum_{p = 0}^{\ell}\left([a_1]Sq^p\cdot zSq^{\ell-p} \right)\psi_1^{2^m}\\ 
    \label{eqn.psi_1}
    &=& \sum_{p = 0}^{\ell}[a_1]Sq^p\psi_1^{2^m}\cdot zSq^{\ell-p}\\
    \label{eqn.psi_1_commute}
    &=& \sum_{p = 0}^{\ell} [a_1]\psi_1^{2^m}Sq^p\cdot zSq^{\ell-p}\\
    &=& \left( [a_1]\psi_1^{2^m} \cdot z \right)Sq^{\ell}\\
    \label{eqn.psi_1_again}
    &=& \left( [a_1] \cdot z \right)\psi_1^{2^m}Sq^{\ell}.
  \end{eqnarray}
  Lines (\ref{eqn.psi_1}) and (\ref{eqn.psi_1_again}) follow from the
  fact that $\psi_1^{2^m}$ acts only on the first factor of a
  monomial.  Line (\ref{eqn.psi_1_commute}) follows from
  Cor.~\ref{cor.shift_comm_1} since for each $p$ in the sum, $p \leq
  \ell < 2^m$.
\end{proof}

\begin{lemma}\label{lem.shift_homotopy}
  For $m \geq 0$, if $a_i \geq 2^{m}$ in $x = [a_1, \ldots, a_i,
    \ldots, a_s]$, then
  \[
    x\psi_i^{2^m}Sq^{2^m} + xSq^{2^m}\psi_i^{2^m} = x.
  \]
\end{lemma}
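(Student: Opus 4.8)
The plan is to reduce Lemma~\ref{lem.shift_homotopy} to the single-factor case Lemma~\ref{lem.shift_homotopy_1} in exactly the same way that Lemma~\ref{lem.shift_commutation} was reduced to Corollary~\ref{cor.shift_comm_1}. First I would observe that, since $Sq^{2^m}$ commutes with the $\Sigma_s$-action and $\psi_i^r \sigma = \sigma \psi_{\sigma(i)}^r$, it suffices to treat the case $i = 1$. Then I would split off the first factor, writing $x = [a_1]\cdot z$ with $z = [a_2, \ldots, a_s]$, and expand both $x\psi_1^{2^m}Sq^{2^m}$ and $xSq^{2^m}\psi_1^{2^m}$ using the Cartan formula, pulling the shift map $\psi_1^{2^m}$ through to the first tensor factor (exactly as in lines~(\ref{eqn.cartan})--(\ref{eqn.psi_1_again}) of the previous proof).

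The key computation is then this: after applying Cartan, we get
\begin{eqnarray*}
  x\psi_1^{2^m}Sq^{2^m} + xSq^{2^m}\psi_1^{2^m}
  &=& \sum_{p=0}^{2^m} \Bigl( [a_1]\psi_1^{2^m}Sq^p + [a_1]Sq^p\psi_1^{2^m} \Bigr)\cdot zSq^{2^m - p}.
\end{eqnarray*}
For $0 \le p < 2^m$ we have $p < 2^m$, so Corollary~\ref{cor.shift_comm_1} applies (using $a_1 \ge 2^m \ge 2^{m-1}$) and the two terms inside the parentheses cancel, killing that summand. Only the $p = 2^m$ term survives, and there the parenthesized expression is $[a_1]\psi_1^{2^m}Sq^{2^m} + [a_1]Sq^{2^m}\psi_1^{2^m}$, which equals $[a_1]$ by Lemma~\ref{lem.shift_homotopy_1} (again using $a_1 \ge 2^m$), while $zSq^{2^m - 2^m} = zSq^0 = z$. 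Hence the whole sum collapses to $[a_1]\cdot z = x$, as desired.

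I would present the computation as a short displayed chain of equalities mirroring the style of Lemma~\ref{lem.shift_commutation}, noting explicitly where each hypothesis $a_1 \ge 2^m$ is used and citing Corollary~\ref{cor.shift_comm_1} for the cancellation of the low-$p$ terms and Lemma~\ref{lem.shift_homotopy_1} for the surviving $p = 2^m$ term. The only mild subtlety — the one place to be careful rather than a genuine obstacle — is making sure the Cartan expansion is applied to both summands simultaneously so that the cancellation is transparent, and checking that $\psi_1^{2^m}$ genuinely commutes with the decomposition $x = [a_1]\cdot z$ on the nose (it does, since it only alters the first symbol). No new ideas beyond those already in the excerpt are needed; this lemma is the $s > 1$ bootstrap of Lemma~\ref{lem.shift_homotopy_1}, just as Lemma~\ref{lem.shift_commutation} bootstraps Corollary~\ref{cor.shift_comm_1}.
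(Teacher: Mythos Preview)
Your proposal is correct and follows essentially the same approach as the paper: reduce to $i=1$, split $x=[a_1]\cdot z$, expand both terms via the Cartan formula, cancel the $p<2^m$ contributions using Corollary~\ref{cor.shift_comm_1}, and finish the $p=2^m$ term with Lemma~\ref{lem.shift_homotopy_1}. The only cosmetic difference is that the paper computes the two Cartan expansions separately before adding them, whereas you combine them into a single sum from the outset.
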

\begin{proof}
  Again, it is sufficient to prove the lemma for $i = 1$ and $s > 1$.
  Write $x = [a_1]\cdot z$ for $z = [a_2, \ldots, a_s]$.  Consider the
  two terms separately:
  \begin{eqnarray}
    \nonumber
    x\psi_1^{2^m}Sq^{2^m} &=& ([a_1] \cdot z)\psi_1^{2^m}Sq^{2^m}\\
    &=& ([a_1]\psi_1^{2^m} \cdot z)Sq^{2^m}
    \label{eqn.shift1}\\
    &=& \sum_{p=0}^{2^m}[a_1]\psi_1^{2^m}Sq^{p} \cdot zSq^{2^m - p}
    \label{eqn.cartan1}\\
    &=& \left(\sum_{p=0}^{2^m - 1}[a_1]\psi_1^{2^m}Sq^{p} \cdot
      zSq^{2^m - p}\right) + [a_1]\psi_1^{2^m}Sq^{2^m} \cdot z.
    \label{eqn.last_first_term}
  \end{eqnarray}
  \begin{eqnarray}
    \nonumber
    xSq^{2^m}\psi_1^{2^m} &=& ([a_1] \cdot z)Sq^{2^m}\psi_1^{2^m}\\
    &=& \sum_{p=0}^{2^m}\left([a_1]Sq^p \cdot zSq^{2^m-p}\right)\psi_1^{2^m}
    \label{eqn.shift2}\\
    &=& \sum_{p=0}^{2^m}[a_1]Sq^p\psi_1^{2^m} \cdot zSq^{2^m-p}\\
    &=& \left(\sum_{p=0}^{2^{m}-1}[a_1]\psi_1^{2^m}Sq^p \cdot
    zSq^{2^m-p}\right) + [a_1]Sq^{2^m}\psi_1^{2^m} \cdot z.
    \label{eqn.last_second_term}
  \end{eqnarray}
  In line~(\ref{eqn.last_second_term}) we just seperate the sum into
  those terms for which $p < 2^m$ (so that $Sq^p$ commutes with
  $\psi_1^{2^m}$), and the top term for which $p = 2^m$.  Add
  lines~(\ref{eqn.last_first_term}) and~(\ref{eqn.last_second_term}),
  and note that the summations cancel completely:
  \begin{eqnarray}
    x\psi_i^{2^m}Sq^{2^m} + xSq^{2^m}\psi_i^{2^m}
    &=& [a_1]\psi_1^{2^m}Sq^{2^m} \cdot z
    + [a_1]Sq^{2^m}\psi_1^{2^m} \cdot z\\
    &=& \left([a_1]\psi_1^{2^m}Sq^{2^m}
    + [a_1]Sq^{2^m}\psi_1^{2^m}\right)\cdot z\\
    &=& [a_1]\cdot z\label{eqn.homotopy_Lemma_9}\\
    &=& x.
  \end{eqnarray}
  Line~(\ref{eqn.homotopy_Lemma_9}) follows from
  Lemma~\ref{lem.shift_homotopy_1} and completes the proof.
\end{proof}

\section{Homotopy Systems in some $\mathcal{A}$-modules}
\label{sec.hom_sys_general}

\subsection{Homotopy Systems in $\widetilde{\Gamma}$}
The results of Lemmas~\ref{lem.shift_commutation}
and~\ref{lem.shift_homotopy} show that there is a homotopy system at
work in $\widetilde{\Gamma}$.  Define for each $1 \leq i \leq s$, and
$k \geq 0$, a graded subspace, $N_s(i,k) \subseteq
\widetilde{\Gamma}_{s,*}$,
\[
  N_s(i,k) = \mathrm{span}\{ [a_1, \ldots, a_i, \ldots, a_s] \in
  \widetilde{\Gamma}_{s_*} \;|\; a_i \geq 2^{k} \}.
\]
\begin{prop}\label{prop.hom_sys_Gamma}
  Fix $s \geq 1$ and $k \geq 0$.  For each $1 \leq i \leq s$, the set
  of shift maps $\{\psi_i^{2^m}\}_{m \leq k}$ forms a $k^{th}$-order
  homotopy system of $\widetilde{\Gamma}_{s,*}$ with null subspace
  $N_s(i,k)$.
\end{prop}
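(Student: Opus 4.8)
The plan is to verify the three defining conditions of a $k^{th}$-order homotopy system (stability of the null subspace, the commutative diagram~\eqref{eqn.comm_diag}, and the homotopy relation~\eqref{eqn.homotopy}) directly for the shift maps $\{\psi_i^{2^m}\}_{m\le k}$ with null subspace $N=N_s(i,k)$, drawing on the computational lemmas of Section~\ref{sec.shift}. Fix $s\ge 1$, $k\ge 0$, and an index $i$ with $1\le i\le s$; by the symmetry $\psi_i^r\sigma=\sigma\psi_{\sigma(i)}^r$ together with the $\Sigma_s$-equivariance of the Steenrod action, it suffices to treat the case $i=1$, so write a general monomial as $x=[a_1,\dots,a_s]$ with the standing assumption $a_1\ge 2^k$ whenever $x\in N$.

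First I would check stability: if $x=[a_1,\dots,a_s]\in N_s(1,k)$, then $a_1\ge 2^k$, and for any $m\le k$ the monomial $x\psi_1^{2^m}=[a_1+2^m,a_2,\dots,a_s]$ has first entry $a_1+2^m\ge 2^k$, hence lies in $N_s(1,k)$; since $\psi_1^{2^m}$ is linear and preserves the spanning set, $N_s(1,k)$ is stable. Second, for the commutative diagram, I must show that for $1\le m\le k$ and $\ell<2^m$ the relation $x\psi_1^{2^m}Sq^\ell = xSq^\ell\psi_1^{2^m}$ holds on $N_s(1,k)$. This is exactly Lemma~\ref{lem.shift_commutation} applied with $i=1$: its hypothesis requires $a_1\ge 2^{m-1}$, which follows from $a_1\ge 2^k\ge 2^{m-1}$ since $m\le k$; I would also note that both $xSq^\ell$ and $x\psi_1^{2^m}Sq^\ell$ remain in $N_s(1,k)$ (the shift only increases the first entry and $Sq^\ell$, by instability in internal degree, either kills the first factor or lowers it by an amount keeping it $\ge 2^{m-1}$, but the cleaner observation is simply that the diagram asserts equality of the two composites $N\to M$, and stability of $N$ under $\psi_1^{2^m}$ already handles the left vertical map). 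Third, the homotopy relation: for $0\le m\le k$ and $x\in N_s(1,k)$ we need $x\psi_1^{2^m}Sq^{2^m}+xSq^{2^m}\psi_1^{2^m}=x$, which is precisely Lemma~\ref{lem.shift_homotopy} with $i=1$, whose hypothesis $a_1\ge 2^m$ again follows from $a_1\ge 2^k\ge 2^m$.

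The main subtlety is bookkeeping rather than a genuine obstacle: one must make sure the degree hypotheses in Lemmas~\ref{lem.shift_commutation} and~\ref{lem.shift_homotopy} are met by the uniform bound $a_i\ge 2^k$ for every relevant $m\le k$ (they are, since $2^k\ge 2^m\ge 2^{m-1}$), and that the reduction to $i=1$ via the symmetric group is spelled out carefully—the null subspace $N_s(i,k)$ transforms to $N_s(\sigma(i),k)$ under $\sigma$, so no new content arises. I would close by remarking that the same argument shows the null subspace could be enlarged somewhat (e.g. the condition $a_i\ge 2^{k-1}$ suffices for~\eqref{eqn.comm_diag} alone), consistent with the earlier remark that $N$ need not be maximal, but that $N_s(i,k)$ as defined is the convenient common domain on which all three conditions hold simultaneously.
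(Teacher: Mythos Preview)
Your proposal is correct and follows essentially the same approach as the paper: verify stability of $N_s(i,k)$ under the shifts, then invoke Lemma~\ref{lem.shift_commutation} and Lemma~\ref{lem.shift_homotopy} on generators to check~\eqref{eqn.comm_diag} and~\eqref{eqn.homotopy}. Your reduction to $i=1$ via the $\Sigma_s$-action is unnecessary since those lemmas are already stated for arbitrary $i$, but it is harmless; the paper's proof is simply the same argument with fewer words.
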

\begin{proof}
  $N_s(i,k)$ is stable under $\psi^{2^m}$, since the effect of the
  shift map is to increase the index in position $i$.  The
  verification of Eqns.~(\ref{eqn.comm_diag}) and~(\ref{eqn.homotopy})
  has been done on generators of $N_s(i,k)$ in
  Lemmas~\ref{lem.shift_commutation} and~\ref{lem.shift_homotopy}.
\end{proof}
As an immediate corollary, we obtain:
\begin{theorem}\label{thm.im-ker_Gamma}
  Let $s \geq 1$ and $k \geq 0$.  If $x \in N_s(i,k)$ for any $1 \leq
  i \leq s$, then $x \in \Delta(k)$ if and only if $x \in I(k)$.
\end{theorem}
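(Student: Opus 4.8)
The plan is to derive Theorem~\ref{thm.im-ker_Gamma} as an immediate consequence of Proposition~\ref{prop.hom_sys_Gamma} together with the Main Theorem~\ref{thm.imker}. First I would note that, by Proposition~\ref{prop.hom_sys_Gamma}, for each fixed $i$ with $1 \leq i \leq s$ the shift maps $\{\psi_i^{2^m}\}_{m \leq k}$ constitute a $k^{th}$-order homotopy system on the right $\mathcal{A}$-module $M = \widetilde{\Gamma}_{s,*}$, with associated null subspace $N = N_s(i,k)$. Applying Theorem~\ref{thm.imker} to this data yields the equality $N_s(i,k) \cap \Delta(k) = N_s(i,k) \cap I(k)$, where $\Delta(k) = \Delta_M(k)$ and $I(k) = I_M(k)$ in the notation of Definitions~\ref{def.Delta_k} and the definition of $I(k)$.

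Next I would translate this identity into the stated biconditional. If $x \in N_s(i,k)$, then the two conditions ``$x \in \Delta(k)$'' and ``$x \in N_s(i,k) \cap \Delta(k)$'' are equivalent, and likewise ``$x \in I(k)$'' is equivalent to ``$x \in N_s(i,k) \cap I(k)$''; since the two intersections coincide, $x \in \Delta(k)$ if and only if $x \in I(k)$. I would also remark that the ``only if'' direction comes with the explicit witness supplied by Theorem~\ref{thm.imker}: for $x \in N_s(i,k) \cap \Delta(k)$ the element $y_i = x\psi_i^{2^k}\psi_i^{2^{k-1}}\cdots\psi_i^{2}\psi_i^1 = [a_1,\ldots,a_i + (2^{k+1}-1),\ldots,a_s]$ is a preimage of $x$ under $Sq^{2^{k+1}-1}$, and in fact all the intermediate $y_j$ realize $x$ as lying in $\mathrm{im}\,Sq^{2^{j+1}-1}$; this gives a very concrete picture of the containments.

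Since the argument is purely a matter of specializing the abstract machinery, there is no real obstacle in the proof itself; the only thing to be careful about is bookkeeping. Specifically, I would make sure the hypothesis of Proposition~\ref{prop.hom_sys_Gamma} (namely $a_i \geq 2^k$, which is exactly membership in $N_s(i,k)$) matches the hypotheses $2^m > \ell$, $a_i \geq 2^{m-1}$ of Lemma~\ref{lem.shift_commutation} and $a_i \geq 2^m$ of Lemma~\ref{lem.shift_homotopy} across the full range $m \leq k$ — this is where the monotonicity $2^{m-1} \leq 2^k$ and $2^m \leq 2^k$ for $m \leq k$ is used, and it has already been absorbed into the statement of the proposition. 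One presentational choice worth flagging: the theorem as stated does not require $x$ to lie in a single common null subspace, only in some $N_s(i,k)$ for \emph{some} $i$; because the homotopy system and hence the conclusion of Theorem~\ref{thm.imker} is available for each index $i$ separately, this is exactly what the quantifier structure of the statement delivers, and no further argument combining different $i$'s is needed.
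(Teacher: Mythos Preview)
Your proposal is correct and follows exactly the paper's approach: invoke Proposition~\ref{prop.hom_sys_Gamma} to obtain the homotopy system with null subspace $N_s(i,k)$, then apply Theorem~\ref{thm.imker}. One small caveat in your parenthetical remark: the explicit formula $y = [a_1,\ldots,a_i + (2^{k+1}-1),\ldots,a_s]$ is only literally correct when $x$ is a single monomial, since a general $x \in N_s(i,k)$ is a linear combination of such terms; this does not affect the argument.
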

\begin{proof}
  This follows from Prop.~\ref{prop.hom_sys_Gamma} and
  Thm.~\ref{thm.imker}.
\end{proof}  

\subsection{Localizations of $H^*(BV_s)$}

Consider $H^*(BV_1, \F_2) \cong \F_2[t]$.  The localization at the
prime $t$, denoted $\Lambda_1$, is an object studied in
Adams~\cite{Adams}, Singer~\cite{Sing3}, and elsewhere.  Note that
$\Lambda_1$ is a left $\mathcal{A}$-algebra, and so its graded vector
space dual, $\nabla_1 = \Lambda_1^*$, is a right
$\mathcal{A}$-algebra.  Writing $[a] \in \nabla_1$ for the element
dual to $t^a \in \Lambda_1$, we find the explicit action of the
Steenrod squares on $\nabla_1$:
\begin{equation}\label{eqn.nabla}
  [a]Sq^i = \left(\!\!\binom{a-i}{i}\!\!\right)[a-i],
\end{equation}
where the notation $\left(\!\binom{n}{i}\!\right)$ stands for the
coefficient of $x^i$ in the formal power series $(1 + x)^n = \sum_{i
  \geq 0} \left(\!\binom{n}{i}\!\right) x^i$, for arbitrary $n \in
\Z$.

Now for each $s \geq 1$, let $\nabla_s$ be the $s$-fold tensor product
of $\nabla_1$ with itself:
\[
  \nabla_s = \left(\nabla_1\right)^{\otimes s} \cong \mathrm{span}\{
        [a_1, a_2, \ldots, a_s] \;|\; a_i \in \Z\},
\]
where the notation $[a_1, a_2, \ldots, a_s]$ represents a monomial of
non-commuting symbols, $[a_1] \otimes [a_2] \otimes \cdots \otimes
[a_s]$.  Formula~(\ref{eqn.nabla}) is extended to elements of
$\nabla_s$ via the Cartan formula so that the graded space $\nabla =
\{ \nabla_{s} \}_{s \geq 0}$ becomes a graded $\mathcal{A}$-algebra.
We found that $\nabla_s$ has the remarkable property that it admits
homotopy systems of all orders each of which has null subspace
consisting of the entire space!

\begin{theorem}\label{thm.im-ker_nabla_s}
  Let $s \geq 1$ and $k \geq 0$ and set $M = \nabla_s$.  Then
  $\Delta_M(k) = I_M(k)$.
\end{theorem}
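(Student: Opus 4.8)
The plan is to exhibit, for each $1 \le i \le s$ and each $k \ge 0$, a $k^{th}$-order homotopy system on $\nabla_s$ whose null subspace is \emph{all} of $\nabla_s$, and then invoke Theorem~\ref{thm.imker}. The natural candidate maps are the shift maps $\psi_i^{2^m}$, defined exactly as in Section~\ref{sec.shift} by $[a_1,\ldots,a_i,\ldots,a_s]\psi_i^{2^m} = [a_1,\ldots,a_i+2^m,\ldots,a_s]$; the key point is that in $\nabla_1$ the index $a$ ranges over all of $\Z$, so there is no instability obstruction and no analogue of the hypothesis ``$a_i \ge 2^m$'' is needed. So the heart of the proof is to re-run Lemmas~\ref{lem.shift_commutation_1} through~\ref{lem.shift_homotopy} in the context of $\nabla_s$, checking that the binomial-coefficient identities still go through when $a$ is an arbitrary integer.

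First I would establish the one-variable statements in $\nabla_1$. For the commutation relation, I would compute $[a]\psi_1^{2^m}Sq^{2^n} = \left(\!\binom{a+2^m-2^n}{2^n}\!\right)[a+2^m-2^n]$ using formula~(\ref{eqn.nabla}), and compare with $[a]Sq^{2^n}\psi_1^{2^m} = \left(\!\binom{a-2^n}{2^n}\!\right)[a+2^m-2^n]$. The required equality of coefficients is $\left(\!\binom{a+2^m-2^n}{2^n}\!\right) = \left(\!\binom{a-2^n}{2^n}\!\right)$ for $m > n$, which reduces to the periodicity $\binom{n}{2^j} \bmod 2$ depending only on $n \bmod 2^{j+1}$ — exactly formula~(\ref{eqn.lucas}) — and this now holds unconditionally because $\left(\!\binom{\cdot}{\cdot}\!\right)$ extends Lucas' theorem to negative entries (the generating-function definition gives $\left(\!\binom{n}{2^j}\!\right) \equiv \binom{n \bmod 2^{j+1}}{2^j}$, since $(1+x)^{2^{j+1}} \equiv 1 \bmod 2$). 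I would then deduce the analogue of Corollary~\ref{cor.shift_comm_1} (for $\ell < 2^m$, $Sq^\ell \in \mathcal{S}_{m-1}$ splits into $2$-power squares each commuting with $\psi_1^{2^m}$), and the analogue of Lemma~\ref{lem.shift_homotopy_1}: $[a]\psi_1^{2^m}Sq^{2^m} + [a]Sq^{2^m}\psi_1^{2^m} = \left(\!\binom{a}{2^m}\!\right)[a] + \left(\!\binom{a-2^m}{2^m}\!\right)[a] = [a]$, the last step being the mod-$2$ identity $\binom{a}{2^m} + \binom{a-2^m}{2^m} \equiv 1$, which again holds for all integers $a$ by the periodicity just noted (the two residues $a$ and $a - 2^m$ differ by $2^m \bmod 2^{m+1}$, so exactly one of the two coefficients is $1$).

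Next I would promote these to $\nabla_s$ for $s > 1$ by the same Cartan-formula argument used in Lemmas~\ref{lem.shift_commutation} and~\ref{lem.shift_homotopy}: writing $x = [a_1] \cdot z$ with $z \in \nabla_{s-1}$, the shift $\psi_1^{2^m}$ touches only the first tensor factor, and in the Cartan expansion of $Sq^\ell$ (resp.\ $Sq^{2^m}$) every internal degree $p$ that occurs on the first factor satisfies $p \le \ell < 2^m$ (resp.\ the top term $p = 2^m$ is handled separately), so the one-variable relations apply term by term. Since the null subspace is all of $\nabla_s$, condition~(\ref{eqn.comm_diag}) is automatic once the commutation holds, and~(\ref{eqn.homotopy}) is exactly the homotopy relation just verified; stability of the null subspace is trivial. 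This gives a $k^{th}$-order homotopy system $\{\psi_i^{2^m}\}_{m \le k}$ on $\nabla_s$ with null subspace $\nabla_s$, and Theorem~\ref{thm.imker} then yields $\nabla_s \cap \Delta_M(k) = \nabla_s \cap I_M(k)$, i.e.\ $\Delta_M(k) = I_M(k)$.

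The main obstacle is bookkeeping with the divided-power binomial symbol $\left(\!\binom{n}{i}\!\right)$ for negative $n$: one must be careful that the action formula~(\ref{eqn.nabla}) is internally consistent (that $\nabla$ really is an $\mathcal{A}$-algebra, so that the Adem relations $Sq^{2n-1}Sq^n = 0$ still hold and Prop.~\ref{prop.wall} applies) and that the mod-$2$ periodicity of $\left(\!\binom{n}{2^j}\!\right)$ in $n$ genuinely extends to $n < 0$. Granting the extended Lucas congruence $(1+x)^n \equiv (1+x)^{n \bmod 2^{j+1}} \pmod 2$, every coefficient computation above is a one-line check, and there is no instability condition left to worry about — which is precisely why the null subspace can be taken to be everything. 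I would also remark that, as in Theorem~\ref{thm.imker}, the explicit preimage of $x \in \Delta_M(k)$ under $Sq^{2^{i+1}-1}$ is $x\psi^{2^i}\cdots\psi^{2}\psi^1$ for any fixed choice of slot $i$, a fact worth recording since it is fully constructive here.
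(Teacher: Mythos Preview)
Your proposal is correct and follows essentially the same approach as the paper: extend the shift maps $\psi_1^{2^m}$ to $\nabla_s$, verify that the one-variable commutation and homotopy identities of Lemmas~\ref{lem.shift_commutation_1}--\ref{lem.shift_homotopy_1} hold for all $a \in \Z$ because the mod-$2$ periodicity of $\left(\!\binom{a}{2^n}\!\right)$ (the analogue of Eqn.~(\ref{eqn.binom})) extends to negative $a$, promote to $\nabla_s$ via the Cartan formula as in Lemmas~\ref{lem.shift_commutation} and~\ref{lem.shift_homotopy}, and conclude that the null subspace is all of $\nabla_s$ so that Theorem~\ref{thm.imker} gives the result. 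Your write-up is in fact more explicit than the paper's about the verification of the extended Lucas congruence and about the consistency of the $\mathcal{A}$-action on $\nabla_s$, which is a good instinct.
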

\begin{proof}
  Consider the shift system, $\{\psi_1^{2^{m}}\}_{m \leq k}$ defined
  in Section~\ref{sec.shift}.  The definition of $\psi_1^{2^m}$ can be
  extended to elements of $\nabla_s$ in the straightforward way,
  namely, $\psi_1^{2^m}$ increases the first index of $[a_1, \ldots,
    a_s] \in \nabla_s$ by $2^m$.  Now Eqn.~(\ref{eqn.binom})
  generalizes neatly:
  \begin{equation}\label{eqn.binom_generalized}
    \textrm{For $a \in \Z$,} \quad \left(\!\!\binom{a}{2^n}\!\!\right)
    = \left\{ \begin{array}{ll} 0, \;\mathrm{if} \; a \equiv 0, 1,
      \ldots, 2^n-1 \; (\mathrm{mod}\; 2^{n+1}),\\ 1, \;\mathrm{if} \;
      a \equiv 2^n, 2^n+1, \ldots, 2^{n+1}-1 \;(\mathrm{mod}\;
      2^{n+1}).\\
    \end{array}\right.
  \end{equation}
  Since there is now no restriction on the value of $a$, we obtain
  unrestricted relations in $\nabla_1$.  For any $a \in \Z$,
  \begin{eqnarray*}
    {[a]}\psi_1^{2^m}Sq^{\ell} &=& [a]Sq^{\ell}\psi_1^{2^m}, \quad
    \textrm{if $2^m > \ell \geq 0$}\\
    {[a]}\psi_1^{2^m}Sq^{2^m} + [a]Sq^{2^m}\psi_1^{2^m} &=& [a]\\
  \end{eqnarray*}
  The relations are easily shown on $\nabla_s$ in general.  For
  any $x \in \nabla_s$,
  \begin{eqnarray*}
    x\psi_1^{2^m}Sq^{\ell} &=& xSq^{\ell}\psi_1^{2^m}, \quad
    \textrm{if $2^m > \ell \geq 0$}\\
    x\psi_1^{2^m}Sq^{2^m} + xSq^{2^m}\psi_1^{2^m} &=& x\\
  \end{eqnarray*}
  Thus there is a $k^{th}$-order homotopy system on $\nabla_s$ with
  null subspace the entire space $\nabla_s$.
\end{proof}

\subsection{Homotopy Systems in $\F_2 \otimes_G \widetilde{\Gamma}_{s,*}$}

For fixed $s \geq 1$, consider a subgroup $G$ of $GL(s, \F_2)$.  The
$\mathcal{A}^+$-annihilated problem in $\F_2 \otimes_G
\widetilde{\Gamma}_{s,*}$ is one of current interest, being dual to
the hit problem for the subspace $H^*\left(\prod^s \R P^{\infty}, \F_2
\right)^G$ of $G$-invariant elements~\cite{JW,Si}.  When $G$ is a
non-trivial subgroup of $GL(s, \F_2)$, the $i^{th}$-place shift maps
$\{\psi_i^{2^m}\}$ defined for $\widetilde{\Gamma}_{s,*}$ may not be
well-defined on $\F_2 \otimes_G \widetilde{\Gamma}_{s,*}$.  However,
certain results can still be found.  First consider the case $G =
\Sigma_s$.  Denote by $\langle a_1, a_2, \ldots, a_s \rangle$ the
image of $[a_1, a_2, \ldots, a_s]$ under the canonical projection
$\widetilde{\Gamma}_{s,*} \to \F_2 \otimes_{\Sigma_s}
\widetilde{\Gamma}_{s,*}$.  Note, the $a_i$'s may freely change
positions in the expression $\langle a_1, a_2, \ldots, a_s \rangle$.
We make the convention that any such term has $a_1 \geq a_2 \geq
\ldots \geq a_s$.
\begin{theorem}\label{thm.im-ker_Gamma_comm}
  Let $s \geq 1$ and $k \geq 0$, and set $M = \F_2 \otimes_{\Sigma_s}
  \widetilde{\Gamma}_{s,*}$.  Suppose $x \in M$ is a sum of terms of
  the form $\langle a_1, a_2, \ldots, a_s\rangle$ such that $a_1 - a_2
  \geq 2^k$.  Then $x \in \Delta_M(k)$ if and only if $x \in I_M(k)$.
\end{theorem}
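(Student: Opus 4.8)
The plan is to exhibit a single $k^{th}$-order homotopy system on $M=\F_2\otimes_{\Sigma_s}\widetilde{\Gamma}_{s,*}$ whose null subspace is
\[
  N=\mathrm{span}\{\langle a_1,a_2,\ldots,a_s\rangle \;|\; a_1\ge a_2\ge\cdots\ge a_s,\ a_1-a_2\ge 2^k\},
\]
and then to quote Theorem~\ref{thm.imker}, since by hypothesis the element $x$ of the statement lies in $N$. For $0\le m\le k$ I would define $\psi^{2^m}\colon M\to M$ on the sorted basis by $\langle a_1,a_2,\ldots,a_s\rangle\psi^{2^m}=\langle a_1+2^m,a_2,\ldots,a_s\rangle$; because $a_1+2^m\ge a_1\ge a_2$ the right-hand side is again a sorted basis element, so $\psi^{2^m}$ is well defined, and it carries $N$ into $N$ since the gap $a_1-a_2$ only increases. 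Informally $\psi^{2^m}$ adds $2^m$ to the largest entry, and it is meant to be the descent through the canonical projection $\pi\colon\widetilde{\Gamma}_{s,*}\to M$ of the first-place shift $\psi_1^{2^m}$ of Section~\ref{sec.shift}; the whole argument consists of making that comparison precise so that the homotopy-system axioms transfer from $\widetilde{\Gamma}_{s,*}$, where they are Lemmas~\ref{lem.shift_commutation} and~\ref{lem.shift_homotopy}, down to $M$.

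The key lemma I would prove first is purely combinatorial: if $w=[b_1,\ldots,b_s]$ is a monomial whose first entry is maximal among its entries, then $\pi(w\psi_1^{2^m})=\pi(w)\psi^{2^m}$, and hence the same identity holds for any $y\in\widetilde{\Gamma}_{s,*}$ that is a linear combination of such monomials. Indeed, after the shift the first entry of $w$ is \emph{strictly} maximal, so both sides equal the sorted monomial whose leading entry is $b_1+2^m$ and whose remaining entries form the multiset $\{b_2,\ldots,b_s\}$; ties among the $b_j$ cause no difficulty. The reason this applies to everything in sight is the gap hypothesis: writing $\tilde x=[a_1,\ldots,a_s]$ with $a_1\ge\cdots\ge a_s$ for a lift of a generator $x=\langle a_1,\ldots,a_s\rangle$ of $N$, each monomial $[a_1-p_1,\ldots,a_s-p_s]$ occurring in $\tilde x Sq^{\ell}$ (so $0\le p_1\le\ell$) has $a_1-p_1\ge a_1-\ell\ge a_1-2^k\ge a_2\ge a_j\ge a_j-p_j$ for every $j\ge2$, whenever $\ell\le 2^k$. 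Thus $\pi$ intertwines $\psi_1^{2^m}$ with $\psi^{2^m}$ on $\tilde x Sq^{\ell}$ for all $\ell\le2^k$ and all $m\le k$, and this is exactly where the hypothesis $a_1-a_2\ge2^k$ is spent.

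With the key lemma in hand, the three homotopy-system axioms for $\{\psi^{2^m}\}_{m\le k}$ with null subspace $N$ follow by pushing identities through $\pi$. Stability of $N$ is already noted. For the commutative diagram~(\ref{eqn.comm_diag}) with $1\le m\le k$ and $\ell<2^m$: since $a_1\ge2^k\ge2^{m-1}$, Lemma~\ref{lem.shift_commutation} gives $\tilde x\psi_1^{2^m}Sq^{\ell}=\tilde x Sq^{\ell}\psi_1^{2^m}$ in $\widetilde{\Gamma}_{s,*}$; applying $\pi$, using $\mathcal{A}$-linearity of $\pi$ together with $\pi(\tilde x\psi_1^{2^m})=x\psi^{2^m}$ on the left side and the key lemma applied to $\tilde x Sq^{\ell}$ on the right side (legitimate because $\ell<2^m\le2^k$), yields $x\psi^{2^m}Sq^{\ell}=xSq^{\ell}\psi^{2^m}$. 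For the homotopy relation~(\ref{eqn.homotopy}) with $0\le m\le k$: since $a_1\ge2^k\ge2^m$, Lemma~\ref{lem.shift_homotopy} gives $\tilde x\psi_1^{2^m}Sq^{2^m}+\tilde x Sq^{2^m}\psi_1^{2^m}=\tilde x$; applying $\pi$, and again using the key lemma on $\tilde x Sq^{2^m}$ (valid because $2^m\le2^k$), gives $x\psi^{2^m}Sq^{2^m}+xSq^{2^m}\psi^{2^m}=x$. Hence $\{\psi^{2^m}\}_{m\le k}$ is a $k^{th}$-order homotopy system on $M$ with null subspace $N$, and Theorem~\ref{thm.imker} immediately gives $N\cap\Delta_M(k)=N\cap I_M(k)$, so $x\in\Delta_M(k)$ if and only if $x\in I_M(k)$. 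The only real obstacle is the bookkeeping in the key lemma: verifying that the gap condition genuinely survives every square we need to apply and that coincidences among entries cause no harm under $\pi$; once that is settled, the rest is routine transfer from $\widetilde{\Gamma}_{s,*}$.
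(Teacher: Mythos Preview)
Your proposal is correct and follows essentially the same route as the paper: define $\psi^{2^m}$ on sorted generators by shifting the largest entry, take the null subspace to be the span of generators satisfying the gap condition $a_1-a_2\ge 2^k$, and verify the homotopy-system axioms by transferring Lemmas~\ref{lem.shift_commutation} and~\ref{lem.shift_homotopy} from $\widetilde{\Gamma}_{s,*}$ through the projection $\pi$. Your ``key lemma'' and the accompanying inequality $a_1-p_1\ge a_1-\ell\ge a_2\ge a_j-p_j$ are exactly the content of the paper's observation that after applying $Sq^{2^m}$ (or more generally $Sq^{\ell}$ with $\ell\le 2^k$) the first position still carries the maximal entry, so ``the shift operation will act on the correct index''; you have simply made that step more explicit than the paper does.
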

\begin{proof}
  We first define a homotopy system on $\F_2 \otimes_{\Sigma_s}
  \widetilde{\Gamma}_{s,*}$.  For $m \leq k$, define $\psi^{2^m}$ on
  generators:
  \[
    \langle a_1, a_2, \ldots, a_s\rangle\psi^{2^m} = \langle a_1+2^m,
    a_2, \ldots, a_s\rangle
  \]
  The map is well-defined because of the convention that $a_1 \geq a_2
  \geq \cdots \geq a_s$.  Now since $a_1 \geq 2^k + a_2$,
  Lemmas~\ref{lem.shift_commutation} and~\ref{lem.shift_homotopy}
  should apply to the shift map we have defined, but there is a
  subtlety here.  After applying $Sq^{2^m}$, the values can decrease
  (and hence be permuted so as to maintain the convention of writing
  the values in non-increasing order), but if $a_1 - a_2 \geq 2^k$,
  then all terms of $\langle a_1, a_2, \ldots, a_s\rangle Sq^{2^m}$
  (for $m \leq k$) are of the form $\langle b_1, b_2, \ldots,
  b_s\rangle$, where $b_1 \geq a_2 \geq b_2, b_3, \ldots, b_s$, and so
  the shift operation will act on the correct index.  This dictates
  that the appropriate null subspace is the span of elements of the
  form $\langle a_1, a_2, \ldots, a_s\rangle$ such that $a_1 - a_2
  \geq 2^k$.  Clearly, this space is invariant under the shift
  operations $\psi^{2^m}$ as defined in this proof.
\end{proof}

A similar trick can be used when $G = C_s$, the cyclic group of order
$s$.  Denote by $(a_1, a_2, \ldots, a_s)$ the image of $[a_1, a_2,
  \ldots, a_s]$ under the canonical projection
$\widetilde{\Gamma}_{s,*} \to \F_2 \otimes_{C_s}
\widetilde{\Gamma}_{s,*}$.  In the term $(a_1, a_2, \ldots, a_s)$, the
values may be cyclically permuted, and so generators of $\F_2
\otimes_{C_s} \widetilde{\Gamma}_{s,*}$ can be taken to have the form
$(a_1, a_2, \ldots, a_s)$ where $a_1$ is a maximal value in the list.
There is still some ambiguity as there could be multiple maximal
values in the list $\{a_i\}$, and so we need to make a definitive
choice in such cases.  Although the exact choice we make will be
immaterial in the following arguments, we nevertheless must specify a
consistent convention.  Let $m = \mathrm{max}\{a_i\}$, and choose the
cyclic order of $\{a_i\}$ (given by an element $g \in C_s$) such that
when $(a_{g(1)}, a_{g(2)}, \ldots, a_{g(s)})$ is read left-to-right as
a number in base $m+1$, that number is maximal among all such numbers
for various cyclic orders of the $\{a_i\}$.
\begin{theorem}\label{thm.im-ker_Gamma_cyclic}
  Let $s \geq 1$ and $k \geq 0$, and set $M = \F_2 \otimes_{C_s}
  \widetilde{\Gamma}_{s,*}$.  Suppose $x \in M$ is a sum of terms of
  the form $(a_1, a_2, \ldots, a_s)$ such that for all $i > 1$, we
  have $a_1 - a_i > 2^k$.  Then $x \in \Delta_M(k)$ if and only if $x
  \in I_M(k)$.
\end{theorem}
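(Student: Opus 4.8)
The plan is to copy, with one modification, the proof of Theorem~\ref{thm.im-ker_Gamma_comm}: I would build a $k^{th}$-order homotopy system on $M = \F_2 \otimes_{C_s} \widetilde{\Gamma}_{s,*}$ whose null subspace is exactly the span of the terms allowed in the hypothesis, and then quote Theorem~\ref{thm.imker}. Concretely, for each $m \leq k$ I would define $\psi^{2^m}$ on generators by
\[
  (a_1, a_2, \ldots, a_s)\psi^{2^m} = (a_1 + 2^m, a_2, \ldots, a_s),
\]
where $(a_1, \ldots, a_s)$ denotes the canonical representative fixed just before the statement (in particular $a_1$ is a maximal entry). This is well defined on all of $M$: enlarging a maximal entry keeps it maximal, so the output is again a canonical representative. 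Let $N \subseteq M$ be the span of the generators $(a_1, \ldots, a_s)$ with $a_1 - a_i > 2^k$ for every $i > 1$; note that for such a generator $a_1$ is the \emph{strict} maximum, so the shift acts on $N$ unambiguously. Shifting the first coordinate upward only widens each gap $a_1 - a_i$, so $N$ is stable under every $\psi^{2^m}$, which is the first axiom of a homotopy system.

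Next I would verify the commutation square~(\ref{eqn.comm_diag}) and the homotopy relation~(\ref{eqn.homotopy}) on $N$ by lifting to $\widetilde{\Gamma}_{s,*}$. Write $\pi\colon \widetilde{\Gamma}_{s,*} \to M$ for the (right $\mathcal{A}$-linear) projection, let $x = (a_1, \ldots, a_s) \in N$, and let $\widetilde{x} = [a_1, \ldots, a_s]$ be its canonical lift. Because $a_1 - a_2 > 2^k \geq 2^m$ we have $a_1 \geq 2^m$, so Lemmas~\ref{lem.shift_commutation} and~\ref{lem.shift_homotopy}, applied to $\widetilde{x}$ and the first-place shift $\psi_1^{2^m}$, give
\[
  \widetilde{x}\psi_1^{2^m}Sq^{\ell} = \widetilde{x}Sq^{\ell}\psi_1^{2^m}
  \quad (\ell < 2^m),
  \qquad
  \widetilde{x}\psi_1^{2^m}Sq^{2^m} + \widetilde{x}Sq^{2^m}\psi_1^{2^m} = \widetilde{x}
\]
in $\widetilde{\Gamma}_{s,*}$. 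Applying $\pi$ yields the corresponding identities in $M$, provided $\pi$ intertwines $\psi_1^{2^m}$ with $\psi^{2^m}$ on every monomial occurring in the computation. This is immediate for $\widetilde{x}$ and $\widetilde{x}\psi_1^{2^m}$; the point requiring an argument is $\widetilde{x}Sq^{\ell}$ (with $\ell \leq 2^k$), where I would check that each monomial $[b_1, \ldots, b_s]$ of $\widetilde{x}Sq^{\ell}$ still has $b_1$ as its \emph{strict} maximum. By the Cartan formula such a monomial has $b_1 \geq a_1 - \ell$ and $b_i \leq a_i$, so $b_1 - b_i \geq (a_1 - a_i) - \ell > 2^k - \ell \geq 0$; hence position $1$ carries the unique largest entry, the tie-breaking convention leaves it there, and $\psi_1^{2^m}$ on $[b_1, \ldots, b_s]$ agrees with $\psi^{2^m}$ on its image $(b_1, \ldots, b_s)$ in $M$.

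With all three axioms in hand, $\{\psi^{2^m}\}_{m \leq k}$ is a $k^{th}$-order homotopy system on $M$ with null subspace $N$, so Theorem~\ref{thm.imker} gives $N \cap \Delta_M(k) = N \cap I_M(k)$. Since the hypothesis on $x$ says exactly that $x \in N$ (and the inclusion $I_M(k) \subseteq \Delta_M(k)$ is in any case Prop.~\ref{prop.wall}), the theorem follows.

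The only real obstacle I anticipate is the bookkeeping in the third paragraph: one must be certain that descending to the $C_s$-coinvariants does not spoil the identification of the shift maps on the monomials produced by the Steenrod action. It is exactly here that the \emph{strict} inequality $a_1 - a_i > 2^k$ is needed, in contrast with the inequality $a_1 - a_2 \geq 2^k$ that sufficed for $\Sigma_s$ in Theorem~\ref{thm.im-ker_Gamma_comm}: with only $a_1 - a_2 \geq 2^k$, the Steenrod action could produce a monomial whose first entry is merely tied for the maximum, and the cyclic convention might then move that entry to another slot, breaking $\pi \circ \psi_1^{2^m} = \psi^{2^m} \circ \pi$. Everything else is a direct transcription of the arguments already carried out for $\widetilde{\Gamma}$ and for the $\Sigma_s$-coinvariants.
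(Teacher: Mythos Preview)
Your proposal is correct and follows essentially the same approach as the paper: define the first-place shift $\psi^{2^m}$ on canonical representatives, take the null subspace $N$ to be the span of the generators in the hypothesis, and invoke Theorem~\ref{thm.imker} after checking (via Lemmas~\ref{lem.shift_commutation} and~\ref{lem.shift_homotopy}) that the Steenrod action cannot displace the strict maximum from position~$1$. Your write-up is in fact more careful than the paper's brief sketch, and your explanation of why the \emph{strict} inequality $a_1 - a_i > 2^k$ is needed (to prevent ties that the cyclic convention might resolve by rotating the maximum away from the first slot) makes explicit the one point the paper leaves implicit.
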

\begin{proof}
  Use the homotopy system $\{\psi_1^{2^m}\}_{m \leq k}$, defined
  originally for $\widetilde{\Gamma}_{s,*}$ (applied to generators of
  $M$ written in the order specified above).  The requirement that
  $a_1$ is at least $2^k$ greater than any other $a_i$ ensures that
  the squaring operations do not ``mix up'' the order of factors in
  each term.
\end{proof}

\section{The Quotient $\Delta_M(k)/I_M(k)$}\label{sec.quotient}

For any right $\mathcal{A}$-module $M$, $I_M(k)$ is a vector subspace
of $\Delta_M(k)$.  However, when $M$ is an $\mathcal{A}$-algebra, much
more can be said.  The following was observed by
Singer~\cite{Singer_private}.
\begin{prop}\label{I_M(k)_ideal}
  If $M$ is a right $\mathcal{A}$-algebra, then for each $k \geq 0$,
  $I_M(k)$ is a two-sided ideal of the algebra $\Delta_M(k)$.
\end{prop}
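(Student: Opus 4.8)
The plan is to verify the two-sided ideal property directly from the definitions, using two facts: first, that $\Delta_M(k)$ is closed under multiplication (it is a subalgebra, since each $\mathrm{ker}\,Sq^{2^i}$ contains $1$ and the Cartan formula shows that a product of two elements each killed by all of $Sq^{2^0},\ldots,Sq^{2^k}$ is again killed by each $Sq^{2^i}$ — this uses that $Sq^{2^i}$ acting on a product expands via Cartan into terms each containing a factor $aSq^j$ with $j\le 2^i<2^{i+1}$, and the Adem/instability structure of $\mathcal{S}_k$ forces those factors to vanish on $\Delta_M(k)$; more precisely, $\mathcal{S}_k$ is a Hopf subalgebra, so its coproduct stays inside $\mathcal{S}_k\otimes\mathcal{S}_k$, hence $\Delta_M(k)=M_{\mathcal{S}_k^+}$ is a subalgebra by the standard argument for annihilated elements of a Hopf-algebra action); and second, the explicit form of the preimages $y_i$ supplied by Theorem~\ref{thm.imker}.

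First I would fix $k$ and let $a\in I_M(k)$ and $b\in\Delta_M(k)$, with the goal of showing $ab\in I_M(k)$ and $ba\in I_M(k)$. Since $a\in I_M(k)\subseteq\Delta_M(k)$ by Prop.~\ref{prop.wall}, and $\Delta_M(k)$ is a subalgebra, we already know $ab,ba\in\Delta_M(k)$; what remains is to exhibit, for each $i\le k$, an element mapping to $ab$ (resp. $ba$) under $Sq^{2^{i+1}-1}$. For each such $i$ write $a=c_iSq^{2^{i+1}-1}$ for some $c_i\in M$. The natural candidate preimage for $ab$ is $c_i b$: applying $Sq^{2^{i+1}-1}$ and expanding by the Cartan formula gives $\sum_{p}(c_iSq^p)(bSq^{2^{i+1}-1-p})$. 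The $p=2^{i+1}-1$ term is exactly $(c_iSq^{2^{i+1}-1})b=ab$, so it suffices to show every other term vanishes, i.e. $bSq^{2^{i+1}-1-p}=0$ for $0\le p<2^{i+1}-1$. This is where $b\in\Delta_M(k)$ is used: $2^{i+1}-1-p$ ranges over $1,\ldots,2^{i+1}-1$, and one must argue that each operation $Sq^q$ with $1\le q\le 2^{i+1}-1$ annihilates $\Delta_M(k)=M_{\mathcal{S}_k^+}$. Since $q<2^{i+1}\le 2^{k+1}$, we have $Sq^q\in\mathcal{S}_k$ (indeed $\mathcal{S}_k$ contains all $Sq^q$ with $q<2^{k+1}$, as $Sq^q$ decomposes via Adem relations into products of $Sq^{2^j}$, $j\le k$), and $Sq^q$ being of positive degree lies in $\mathcal{S}_k^+$, hence kills every element of $M_{\mathcal{S}_k^+}$. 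The argument for $ba$ is symmetric, using the candidate preimage $b\,c_i$ and the same Cartan expansion, with the roles of left and right factors swapped.

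The main obstacle — really the only nontrivial point — is the lemma that $\mathcal{S}_k^+$ annihilates $\Delta_M(k)$, equivalently that $\bigcap_{i=0}^k\mathrm{ker}\,Sq^{2^i}=\bigcap_{P\in\mathcal{S}_k^+}\mathrm{ker}\,P$. One direction is trivial; the other requires knowing that $\mathcal{S}_k$ is generated as an algebra by $\{Sq^{2^i}\}_{i\le k}$, so that any positive-degree element of $\mathcal{S}_k$ is a sum of monomials each ending (when written with the module action on the right) in some $Sq^{2^i}$ with $i\le k$, hence kills $\Delta_M(k)$ term by term. This is essentially the definition of $\mathcal{S}_k$ given in the excerpt, so it should go through cleanly; the companion fact that $Sq^q\in\mathcal{S}_k$ for all $0<q<2^{k+1}$ follows from Milnor's basis or from iterated Adem relations and is standard. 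I would present the subalgebra claim for $\Delta_M(k)$ and this annihilation lemma as a preliminary remark, then give the Cartan computation above for $ab$ and note the symmetric case $ba$ in one line.
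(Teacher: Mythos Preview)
Your proposal is correct and follows essentially the same route as the paper: both arguments hinge on the observation that $\Delta_M(k)=M_{\mathcal{S}_k^+}$ is annihilated by every positive-degree element of $\mathcal{S}_k$ (in particular by each $Sq^{2^{i+1}-1}$), and then a single Cartan expansion shows that the obvious candidate preimage $c_ib$ (resp.\ $bc_i$) really does map to $ab$ (resp.\ $ba$) under $Sq^{2^{i+1}-1}$. The paper writes this one step more tersely as $y\cdot z_iSq^{2^{i+1}-1}=(yz_i)Sq^{2^{i+1}-1}$, but the content is identical; note also that your invocation of Theorem~\ref{thm.imker} in the plan is not actually needed or used --- arbitrary preimages $c_i$ suffice.
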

\begin{proof}
  The fact that $\Delta_M(k)$ is an algebra follows easily from the
  Cartan formula.  Now let $x \in I_M(k)$ and $y \in \Delta_M(k)$.
  For each $i \leq k$, write $x = z_iSq^{2^{i+1}-1}$.  Note, for $i =
  0, 1, 2, \ldots, k$, $Sq^{2^{i+1}-1} \in \mathcal{S}^+_k$.
  Therefore, since $y$ is $\mathcal{S}^+_k$-annihilated, we have for
  each $i \leq k$:
  \[
    yx = y\cdot z_iSq^{2^{i+1}-1} = (yz_i)Sq^{2^{i+1}-1},
  \]
  which proves $yx \in I_M(k)$.  Similarly, $xy \in I_M(k)$.
\end{proof}

\begin{definition}
  For each $k \geq 0$, the space of {\it un-hit
    $\mathcal{S}^+_k$-annihilateds} is the quotient of spaces,
  \[
    U_M(k) = \Delta_M(k)/I_M(k).
  \]
  If $M$ is graded, then $U_M(k)$ inherits this grading.  If $M$ is an
  $\mathcal{A}$-algebra, then $U_M(k)$ is an algebra with product
  induced from the product in $M$.
\end{definition}

\begin{rmk}
  When $M$ is the homology of an associative $H$-space, then the
  Pontryagin product makes $M$ into a right $\mathcal{A}$-algebra, and
  hence each $U_M(k)$ is an algebra.  On the other hand, if $M =
  \{M_{s,*}\}_{s \geq 0}$ is bigraded with $M_{s,*} = H_*(X_s, \F_2)$,
  where $\{X_s\}_{s \geq 0}$ is a family of spaces equipped with an
  associative pairing $X_s \times X_t \to X_{s+t}$, then $M$ admits
  the structure of bigraded right $\mathcal{A}$-algebra, and so each
  $U_M(k)$ is a bigraded algebra.  It is interesting to see that the
  bigraded $\widetilde{\Gamma} = \{\widetilde{\Gamma}_{s,*}\}_{s \geq
    0}$ possesses both types of multiplication, and so
  $U_{\tilde{\Gamma}}(k)$ is a right $\mathcal{A}$-algebra in two
  very different ways!
\end{rmk}

\section{Fine Structure of $\Delta(1)$}\label{sec.structure}

For the remainder of this paper, we examine the structure of
$\Delta(1) = \mathrm{ker}\,Sq^1 \cap \mathrm{ker}\,Sq^2 \subseteq
\widetilde{\Gamma}$.  We will eventually produce an element $z \in
\Delta(1)_{5,9}$ that is not in the image of $Sq^3$, hence not in
$I(1)_{5,9}$.  In this section, fix $s \geq 2$, $d \geq s$, and write
$x \in \widetilde{\Gamma}_{s,d}$ in the form $x = \sum_{i \geq 1}
      [i]\cdot x_i$ for elements $x_i \in
      \widetilde{\Gamma}_{s-1,d-i}$.  The following two propositions
      characterize what it means for $x$ to be in
      $\mathrm{ker}\,Sq^1$, {\it resp.} $\mathrm{ker}\,Sq^2$.
\begin{prop}\label{prop.Sq^1}
  If $x = \sum_{i \geq 1} [i]\cdot x_i \in \mathrm{ker}\,Sq^1$,
  then for each $n \geq 1$,
  \begin{eqnarray}
    x_{2n} &=& x_{2n-1}Sq^1, \quad \textrm{and}\label{eqn.Sq^1-1}
    \label{eqn.2n-1}\\
    x_{2n}Sq^1 &=& 0. \label{eqn.Sq^1-2}
    \label{eqn.2n}
  \end{eqnarray}
\end{prop}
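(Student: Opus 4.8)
The plan is to expand $xSq^1 = 0$ using the Cartan formula on each term $[i]\cdot x_i$ and then collect coefficients of the basis monomials $[j]\cdot(\text{something})$ on the left-hand factor. First I would recall the single-variable action $[i]Sq^1 = \binom{i-1}{1}[i-1]$, which is $[i-1]$ when $i$ is even and $0$ when $i$ is odd. Thus Cartan gives
\[
  xSq^1 = \sum_{i \geq 1}\bigl([i]Sq^1\cdot x_i + [i]\cdot x_iSq^1\bigr)
        = \sum_{i \text{ even}} [i-1]\cdot x_i + \sum_{i \geq 1} [i]\cdot x_iSq^1.
\]
Reindexing the first sum with $i = 2n$ (so $i - 1 = 2n-1$) and splitting the second sum by the parity of $i$, I would compare coefficients of $[2n-1]\cdot(-)$ and $[2n]\cdot(-)$ separately for each $n \geq 1$. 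The coefficient of $[2n-1]$ picks up $x_{2n}$ from the first sum and $x_{2n-1}Sq^1$ from the second, giving Eqn.~(\ref{eqn.2n-1}); the coefficient of $[2n]$ picks up only $x_{2n}Sq^1$, giving Eqn.~(\ref{eqn.2n}).

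The one point requiring a little care is the legitimacy of "comparing coefficients": this works because $\widetilde{\Gamma}_{s,d}$ has the monomial basis $[a_1,\dots,a_s]$, and writing $x = \sum_i [i]\cdot x_i$ with $x_i \in \widetilde{\Gamma}_{s-1,d-i}$ is exactly the decomposition of $\widetilde{\Gamma}_{s,d}$ as a direct sum over the value of the first index $a_1 = i$. Since each $x_iSq^1$ again lies in $\widetilde{\Gamma}_{s-1,*}$ and $[j]\cdot(-)$ preserves this direct-sum decomposition, the terms indexed by distinct first-coordinate values are linearly independent, so setting the total equal to zero forces each graded piece to vanish. I would also note the degenerate/boundary behavior is harmless: when $i = 1$ the term $[1]Sq^1 = 0$ contributes nothing, consistent with the first sum ranging only over even $i \geq 2$, i.e. $n \geq 1$.

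I do not expect a genuine obstacle here — the whole statement is a bookkeeping consequence of the Cartan formula plus the explicit formula for $[i]Sq^1$ and the linear independence of the monomial basis. If anything, the mild subtlety is simply being careful that the two displayed equations exhaust the content of $xSq^1 = 0$ (they do: every basis monomial of $\widetilde{\Gamma}_{s,d-1}$ has a first index that is either odd, $=2n-1$, or even, $=2n$), and that no cross-terms between the two sums were dropped. Once the coefficient extraction is written out cleanly, Eqns.~(\ref{eqn.2n-1}) and~(\ref{eqn.2n}) drop out directly.
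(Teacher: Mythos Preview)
Your proposal is correct and follows essentially the same route as the paper: apply the Cartan formula, use the explicit action $[i]Sq^1 = [i-1]$ for $i$ even and $0$ for $i$ odd, and then separate terms by the parity of the first index. The paper phrases the coefficient-comparison step as freeness of $\widetilde{\Gamma}_{s,*}$ on the generators $\{[i]\}_{i\geq 1}$, which is exactly your direct-sum-over-first-index justification.
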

\begin{rmk}
  Of course, Eqn.~(\ref{eqn.Sq^1-2}) follows as a consequence of
  Eqn.~(\ref{eqn.Sq^1-1}).
\end{rmk}
\begin{proof}
  Apply $Sq^1$ to both sides of $x = \sum_{i \geq 1} [i] \cdot x_i$.
  \begin{eqnarray*}
    0 &=& \left(\sum_{i \geq 1} [i] \cdot x_i\right)Sq^1 \\
    &=& \sum_{n \geq 1} [2n-1] \cdot x_{2n-1}Sq^1
    + \sum_{n \geq 1} \left( [2n-1] \cdot x_{2n} 
    + [2n] \cdot x_{2n}Sq^1\right)\\
    &=& \sum_{n \geq 1} [2n-1] \cdot \left( x_{2n-1}Sq^1 + x_{2n} \right)
    + \sum_{n \geq 1} [2n] \cdot x_{2n}Sq^1.
  \end{eqnarray*}
  Since $\widetilde{\Gamma}_{s,*}$ is a free algebra on the generators
  $\{[i]\}_{i \geq 1}$, the above computation proves~(\ref{eqn.2n-1})
  and~(\ref{eqn.2n}).
\end{proof}

\begin{prop}\label{prop.Sq^2}
  If $x = \sum_{i \geq 1} [i]\cdot x_i \in \mathrm{ker}\,Sq^2$, then
  for each $m \geq 1$,
  \begin{eqnarray}
    x_{4m-2}Sq^1 &=& x_{4m-3}Sq^2, \label{eqn.Sq^2-1}\\
    x_{4m} &=& x_{4m-2}Sq^2, \label{eqn.Sq^2-2}\\
    x_{4m+1} &=& x_{4m-1}Sq^2 + x_{4m}Sq^1, \quad \textrm{and}
    \label{eqn.Sq^2-3}\\
    x_{4m}Sq^2 &=& 0. \label{eqn.Sq^2-4}
  \end{eqnarray}
\end{prop}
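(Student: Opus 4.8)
The plan is to mirror exactly the strategy used for Proposition~\ref{prop.Sq^1}: apply $Sq^2$ to the decomposition $x = \sum_{i \geq 1} [i]\cdot x_i$, expand using the Cartan formula together with the known action $[i]Sq^2 = \binom{i-2}{2}[i-2]$ and $[i]Sq^1 = \binom{i-1}{1}[i-1] = (i-1)[i-1]$ on the one-dimensional factors, and then read off relations by comparing coefficients of each basis monomial $[j]\cdot(\text{something})$, using freeness of $\widetilde{\Gamma}_{s,*}$ on the generators $\{[i]\}_{i\geq 1}$. First I would record the effect of $Sq^2$ on $[i]$: by formula~(\ref{eqn.binom}) with $n=1$, $\binom{i-2}{2}$ depends only on $i \bmod 4$, so $[i]Sq^2$ is nonzero precisely when $i \equiv 2, 3 \pmod 4$. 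Similarly $[i]Sq^1 \neq 0$ precisely when $i$ is even. So in the Cartan expansion $([i]\cdot x_i)Sq^2 = [i]\cdot x_iSq^2 + [i]Sq^1\cdot x_iSq^1 + [i]Sq^2\cdot x_i$, the three terms contribute to the monomials starting with $[i]$, $[i-1]$, and $[i-2]$ respectively, and the middle/last terms survive only for the appropriate residues of $i$.

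Next I would reorganize the total sum $0 = \sum_i ([i]\cdot x_i)Sq^2$ by collecting, for each index $j$, all contributions to monomials beginning with $[j]$. A contribution to $[j]$ comes from: the first term with $i = j$ (giving $x_jSq^2$); the middle term with $i = j+1$ when $j+1$ is even, i.e.\ $j$ odd (giving $x_{j+1}Sq^1$, since $[j+1]Sq^1 = [j]$ over $\F_2$); and the last term with $i = j+2$ when $j+2 \equiv 2,3 \pmod 4$, i.e.\ $j \equiv 0, 1 \pmod 4$ (giving $x_{j+2}$, since $\binom{j}{2} = 1$ in those residues). Writing $j$ in the four residue classes mod $4$ and setting each coefficient of $[j]\cdot(-)$ to zero then yields four families of relations. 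Taking $j = 4m-3$ (residue $1$) gives $x_{4m-1} + x_{4m-3}Sq^2 + x_{4m}Sq^1 = 0$, which is~(\ref{eqn.Sq^2-3}); taking $j = 4m-2$ (residue $2$, odd $j$ doesn't apply — rather $j$ even so no middle term, and residue $2 \not\equiv 0,1$ so no last term) leaves just $x_{4m-2}Sq^2 = 0$ wait — I need to be careful: for $j$ even the middle term is absent, and for $j \equiv 2 \pmod 4$ the last term is absent, so residue $2$ gives only $x_{4m-2}Sq^2 = 0$, which is not literally one of the listed equations but is consistent; the listed~(\ref{eqn.Sq^2-4}) comes from $j \equiv 0 \pmod 4$. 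Let me recount: $j = 4m$ (residue $0$): $j$ even so no middle term, $j \equiv 0$ so last term present with $i = 4m+2$, giving $x_{4m}Sq^2 + x_{4m+2} = 0$. Hmm, that gives $x_{4m+2} = x_{4m}Sq^2$, matching~(\ref{eqn.Sq^2-2}) after reindexing $m$, and separately the residue-$3$ case $j = 4m-1$: $j$ odd so middle term with $i = 4m$ present, $j \equiv 3$ so last term absent, giving $x_{4m-1}Sq^2 + x_{4m}Sq^1 = 0$ — wait this conflicts with what I want. I would recheck the residue bookkeeping carefully against formula~(\ref{eqn.binom}), since getting the four congruence classes and which of the three Cartan terms survives in each is the crux; once that table is correct, equations~(\ref{eqn.Sq^2-1})--(\ref{eqn.Sq^2-4}) fall out by inspection.

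The main obstacle is purely the combinatorial bookkeeping of the Cartan expansion: there are three terms in $([i]\cdot x_i)Sq^2$, each shifting the leading index by a different amount ($0$, $1$, $2$) and each gated by a different mod-$4$ (or mod-$2$) condition on $i$, so lining them up correctly to collect the coefficient of a fixed leading monomial $[j]$ requires care. Everything else — the freeness of $\widetilde{\Gamma}_{s,*}$ that lets us equate coefficients, and the explicit binomial values from~(\ref{eqn.binom}) — is routine. I would also remark, as the paper does after Proposition~\ref{prop.Sq^1}, that~(\ref{eqn.Sq^2-4}) is a formal consequence of~(\ref{eqn.Sq^2-2}) (apply $Sq^2$ to both sides and use $Sq^2Sq^2 = Sq^3Sq^1$ together with instability, or simply note $x_{4m}Sq^2 = x_{4m-4}Sq^2Sq^2$ and chase), so it is listed only for convenience.
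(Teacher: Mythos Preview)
Your approach is exactly the paper's: apply $Sq^2$, expand via Cartan, and collect by leading index using freeness. The only issue is a single parity slip that propagates through your case analysis. You assert that $[i]Sq^2 \neq 0$ precisely when $i \equiv 2,3 \pmod 4$, but in fact $[i]Sq^2 = \binom{i-2}{2}[i-2]$ is nonzero precisely when $i-2 \equiv 2,3 \pmod 4$, i.e.\ when $i \equiv 0,1 \pmod 4$ (check: $[4]Sq^2=[2]$, $[5]Sq^2=[3]$, $[6]Sq^2=[7]Sq^2=0$). Equivalently, the $x_{j+2}$ contribution to the coefficient of $[j]$ appears when $\binom{j}{2}=1$, which is $j\equiv 2,3\pmod 4$, not $j\equiv 0,1$.

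With this corrected, your residue table becomes:
\begin{itemize}
  \item $j\equiv 0$: only $x_jSq^2$, giving~(\ref{eqn.Sq^2-4});
  \item $j\equiv 1$: $x_jSq^2 + x_{j+1}Sq^1$, giving~(\ref{eqn.Sq^2-1});
  \item $j\equiv 2$: $x_jSq^2 + x_{j+2}$, giving~(\ref{eqn.Sq^2-2});
  \item $j\equiv 3$: $x_jSq^2 + x_{j+1}Sq^1 + x_{j+2}$, giving~(\ref{eqn.Sq^2-3}).
\end{itemize}
This is precisely the paper's expansion, and the contradictions you noticed (``wait this conflicts\ldots'') disappear. Everything else in your write-up is fine.
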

\begin{proof}
  Apply $Sq^2$ to both sides of $x = \sum_{i \geq 1} [i] \cdot x_i$.
  \[
    0 = \left(\sum_{i \geq 1} [i] \cdot x_i\right)Sq^2
  \]
  We then expand using the Cartan formula and perform many tedious
  (though elementary) manipulations on the summation.  The details are
  left to the diligent reader.  Care must be taken with the low order
  terms, but eventually we find:
  \begin{eqnarray}
    0 &=& [1] \cdot \left(x_1Sq^2 + x_2Sq^1\right) +
    [2] \cdot \left(x_2Sq^2 + x_4\right)
    + [3] \cdot \left(x_3Sq^2 + x_4Sq^1 + x_5\right)
    \nonumber\\
    && {}+ \sum_{m \geq 1} [4m] \cdot x_{4m}Sq^2 
    \nonumber\\
    && {} + \sum_{m \geq 1} [4m+1] \cdot \left(x_{4m+1}Sq^2 + 
    x_{4m+2}Sq^1 \right)
    \nonumber\\
    && {} + \sum_{m \geq 1} [4m+2] \cdot \left(x_{4m+2}Sq^2 +
    x_{4m+4}\right)
    \nonumber\\
    && {} + \sum_{m \geq 1} [4m+3] \cdot \left(x_{4m+3}Sq^2 +
    x_{4m+4}Sq^1 + x_{4m+5} \right)
    \nonumber
  \end{eqnarray}
  Including each of the low order terms into the appropriate
  summation, the individual summations then give each of the
  relations,~(\ref{eqn.Sq^2-1}),~(\ref{eqn.Sq^2-2}),~(\ref{eqn.Sq^2-3}),
  and~(\ref{eqn.Sq^2-4}).
\end{proof}

\begin{prop}\label{prop.structure}
  Suppose $x \in \widetilde{\Gamma}_{s,*}$ for some $s \geq 2$.  Then
  $x \in \Delta(1)$ if and only if $x = \sum_{i \geq 1} [i]\cdot x_i$,
  where the $x_i \in \widetilde{\Gamma}_{s-1,*}$ satisfy:
  \begin{eqnarray}
    x_1 &\in& \mathrm{ker}\,Sq^2, \label{eqn.Delta(1)-1}\\
    x_2 &=& x_1Sq^1, \label{eqn.Delta(1)-2}\\
    x_3 &\in& \left(Sq^1\right)^{-1}(x_1Sq^3), \label{eqn.Delta(1)-3}\\
    \textrm{and for each $m \geq 1$,} \qquad\qquad
    x_{4m} &=& x_{4m-1}Sq^1,\label{eqn.Delta(1)-4}\\
    x_{4m+1} &=& x_{4m-1}Sq^2, \label{eqn.Delta(1)-5}\\
    x_{4m+2} &=& x_{4m-1}Sq^2Sq^1, \label{eqn.Delta(1)-6}\\
    x_{4m+3} &\in& \left( Sq^1 \right)^{-1}\left[x_{4m-1}Sq^2Sq^3\right].
    \label{eqn.Delta(1)-7}
  \end{eqnarray}
\end{prop}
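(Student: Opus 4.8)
The plan is to derive Proposition~\ref{prop.structure} by combining Propositions~\ref{prop.Sq^1} and~\ref{prop.Sq^2}, since $\Delta(1) = \mathrm{ker}\,Sq^1 \cap \mathrm{ker}\,Sq^2$ means $x = \sum_{i \geq 1} [i]\cdot x_i$ lies in $\Delta(1)$ precisely when the families $\{x_i\}$ satisfy simultaneously the relations of both propositions. So the argument has two directions. For the forward direction, I would assume $x \in \Delta(1)$ and take as given: from Prop.~\ref{prop.Sq^1}, $x_{2n} = x_{2n-1}Sq^1$ and $x_{2n}Sq^1 = 0$ for all $n \ge 1$; from Prop.~\ref{prop.Sq^2}, the four relations~(\ref{eqn.Sq^2-1})--(\ref{eqn.Sq^2-4}). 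The goal is to massage these into the seven asserted relations. The converse direction is to check that if $\{x_i\}$ satisfies (\ref{eqn.Delta(1)-1})--(\ref{eqn.Delta(1)-7}), then it satisfies all the hypotheses of both Prop.~\ref{prop.Sq^1} and Prop.~\ref{prop.Sq^2}; by those propositions (used in the other direction, which is immediate from their proofs since the displayed computations are reversible given freeness of the algebra), $x$ is killed by both $Sq^1$ and $Sq^2$.

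The bulk of the work is bookkeeping with Adem relations in low degrees. Relation~(\ref{eqn.Delta(1)-2}) is literally the $n=1$ case of~(\ref{eqn.2n-1}). For~(\ref{eqn.Delta(1)-1}): from~(\ref{eqn.Sq^2-3}) with appropriate low-degree reading we get $x_1Sq^2 + x_2Sq^1 = 0$, and substituting $x_2 = x_1Sq^1$ gives $x_1Sq^2 = x_1Sq^1Sq^1 = 0$ since $Sq^1Sq^1 = 0$, so $x_1 \in \mathrm{ker}\,Sq^2$. For~(\ref{eqn.Delta(1)-3}): the low-degree term of Prop.~\ref{prop.Sq^2}'s expansion gives $x_3Sq^2 + x_4Sq^1 + x_5 = 0$ and $x_2Sq^2 + x_4 = 0$; I would instead extract the condition on $x_3$ directly — from~(\ref{eqn.Sq^2-1}) at $m=1$, $x_2Sq^1 = x_1Sq^2$, which is automatic, and the real constraint $x_3Sq^1 = x_1Sq^3$ should emerge from combining the $[1]\cdot(\ldots)$ and $[2]\cdot(\ldots)$ coefficient relations together with~(\ref{eqn.Delta(1)-2}); I expect $x_3Sq^1 = x_1Sq^3$ after using $Sq^1Sq^2 = Sq^3$. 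For the generic relations~(\ref{eqn.Delta(1)-4})--(\ref{eqn.Delta(1)-7}) at index $4m$, $4m{+}1$, $4m{+}2$, $4m{+}3$: relation~(\ref{eqn.Delta(1)-4}) is~(\ref{eqn.2n-1}) at $n=2m$; relation~(\ref{eqn.Delta(1)-5}) comes from~(\ref{eqn.Sq^2-3}) at index $4(m{-}1){+}1 = 4m{-}3$... — here I must be careful to re-index, since~(\ref{eqn.Sq^2-2}) reads $x_{4m} = x_{4m-2}Sq^2$ and I want things expressed in terms of $x_{4m-1}$. I would use~(\ref{eqn.Sq^2-2}) shifted: $x_{4m+1}$ from~(\ref{eqn.Sq^2-3}) is $x_{4m-1}Sq^2 + x_{4m}Sq^1 = x_{4m-1}Sq^2 + x_{4m-1}Sq^1Sq^1 = x_{4m-1}Sq^2$, giving~(\ref{eqn.Delta(1)-5}). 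Then~(\ref{eqn.Delta(1)-6}): $x_{4m+2} = x_{4m+1}Sq^1 = x_{4m-1}Sq^2Sq^1$ using~(\ref{eqn.2n-1}) at $n = 2m{+}1$ and~(\ref{eqn.Delta(1)-5}). Finally~(\ref{eqn.Delta(1)-7}): from~(\ref{eqn.Sq^2-1}) at $m{+}1$, $x_{4m+2}Sq^1 = x_{4m+1}Sq^2$, i.e. $x_{4m-1}Sq^2Sq^1Sq^1 = 0 = x_{4m+1}Sq^2$ — consistent — and the constraint on $x_{4m+3}$ is $x_{4m+3}Sq^1 = x_{4m+1}Sq^3 = x_{4m-1}Sq^2Sq^3$ (using $Sq^1Sq^2 = Sq^3$, so that whatever relation among $x_{4m+2}, x_{4m+3}, x_{4m+4}$ comes out collapses to this after substitution).

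The main obstacle is the re-indexing and the correct handling of the low-degree ``edge'' terms, together with verifying that the various relations from the two propositions are not merely necessary but jointly equivalent to the seven listed — in particular, checking that relations like~(\ref{eqn.Sq^2-4}) ($x_{4m}Sq^2 = 0$) and~(\ref{eqn.Sq^1-2}) are automatically implied by~(\ref{eqn.Delta(1)-4})--(\ref{eqn.Delta(1)-7}) via $Sq^1Sq^1 = 0$, $Sq^2Sq^2 = Sq^3Sq^1$, and $Sq^1Sq^2Sq^1 = 0$, so that no extra constraints are lost or gained. Concretely, $x_{4m}Sq^2 = x_{4m-1}Sq^1Sq^2 = x_{4m-1}Sq^3$, which need not obviously vanish — so I would need to check it does vanish as a consequence of $x_{4m-1}$ itself satisfying the index-$(4m{-}1)$ relations, i.e., by induction $x_{4m-1} = x_{4(m-1)-1}Sq^2Sq^{?}\cdots$ with a $Sq^1$ on the far right, making $x_{4m-1}Sq^3$... — this bottoms out at the base case where~(\ref{eqn.Delta(1)-3}) gives $x_3$ only up to $\ker Sq^1$, which is exactly why~(\ref{eqn.Delta(1)-3}) and~(\ref{eqn.Delta(1)-7}) are stated as preimage conditions rather than equalities. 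Reconciling this inductive vanishing with the freedom in the odd-index preimages is the delicate point; everything else is routine Adem-relation manipulation, which I would state compactly and leave the fully expanded summation to the reader as in Prop.~\ref{prop.Sq^2}.
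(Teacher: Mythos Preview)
Your approach is essentially the paper's: the forward direction is derived exactly by combining the relations of Prop.~\ref{prop.Sq^1} and Prop.~\ref{prop.Sq^2} with low-degree Adem identities, and the paper likewise declines to write out the reverse direction, calling it a ``routine verification.''

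Your one genuine worry---whether $x_{4m}Sq^2 = 0$ is forced by (\ref{eqn.Delta(1)-1})--(\ref{eqn.Delta(1)-7})---is real but easily dispatched, and no induction is needed. You correctly compute $x_{4m}Sq^2 = x_{4m-1}Sq^1Sq^2 = x_{4m-1}Sq^3$. Now use that $x_{4m-1}Sq^1$ is \emph{determined}: by (\ref{eqn.Delta(1)-3}) (for $m=1$) or (\ref{eqn.Delta(1)-7}) (for $m\ge 2$) it equals $x_1Sq^3$ or $x_{4(m-1)-1}Sq^2Sq^3$, respectively. Hence
\[
  x_{4m-1}Sq^3 \;=\; x_{4m-1}Sq^1Sq^2 \;=\; (\,\cdots\,)Sq^3Sq^2 \;=\; 0,
\]
since the Adem relation $Sq^3Sq^2 = 0$ kills it outright. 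The same identity handles the check of (\ref{eqn.Sq^2-1}) for $m\ge 2$, where the right-hand side becomes $x_{4(m-1)-1}Sq^2Sq^2 = x_{4(m-1)-1}Sq^3Sq^1$, and $x_{4(m-1)-1}Sq^3 = 0$ as just shown. So the ``freedom in the odd-index preimages'' causes no trouble: only $x_{4m-1}Sq^1$ enters these checks, and that quantity is pinned down. Add $Sq^3Sq^2 = 0$ to your list of Adem relations and the reverse direction closes up with no induction.
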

\begin{proof}
  Fix $s \geq 2$, and let $x \in \Delta(1)_{s,*}$.  Then $xSq^1 =
  xSq^2 = 0$.  Prop.~\ref{prop.Sq^1} gives $x_2 = x_1Sq^1$ and for
  each $m \geq 1$, $x_{4m} = x_{4m-1}Sq^1$,
  proving~(\ref{eqn.Delta(1)-2}) and~(\ref{eqn.Delta(1)-4}).  Next,
  with $m=1$,~(\ref{eqn.Sq^2-1}) gives $x_1Sq^2 = x_2Sq^1$.
  Then~(\ref{eqn.2n-1}) can be used: $x_2Sq^1 = (x_1Sq^1)Sq^1 = 0$,
  proving~(\ref{eqn.Delta(1)-1}).  In order to
  prove~(\ref{eqn.Delta(1)-3}), all that we must show is that $x_3Sq^1
  = x_1Sq^3$.  This is done by using~(\ref{eqn.Sq^2-2}) with $m=1$:
  $x_4 = x_2Sq^2$, and then applying~(\ref{eqn.Sq^1-1}) to get:
  $x_3Sq^1 = x_1Sq^1Sq^2 = x_1Sq^3$.  Next,~(\ref{eqn.Sq^2-3})
  together with~(\ref{eqn.Sq^1-2}) yields~(\ref{eqn.Delta(1)-5}).
  Apply $Sq^1$ to both sides of~(\ref{eqn.Delta(1)-5}) to
  get~(\ref{eqn.Delta(1)-6}).  Finally, to
  prove~(\ref{eqn.Delta(1)-7}), observe (for any $m \geq 1$):
  \begin{eqnarray*}
    x_{4m+4} &=& x_{4m+2}Sq^2, \quad 
    \textrm{by~(\ref{eqn.Sq^2-2})}\\
    x_{4m+3}Sq^1 &=& x_{4m+1}Sq^1Sq^2, \quad 
    \textrm{by~(\ref{eqn.Sq^1-1})}\\
    x_{4m+3}Sq^1 &=& x_{4m-1}Sq^2Sq^1Sq^2, \quad 
    \textrm{by~(\ref{eqn.Delta(1)-5})}\\
   &=& x_{4m-1}Sq^2Sq^3.
  \end{eqnarray*}
  This concludes the forward direction of the proof.

  For the reverse direction, we are given $x \in
  \widetilde{\Gamma}_{s,*}$ with $s \geq 2$, and $x = \sum_{i \geq 1}
  [i]\cdot x_i$, with the relations on $x_i$ as stated in
  Prop.~\ref{prop.structure}.  We must show that $xSq^1 = 0$ and
  $xSq^2 = 0$.  This routine verification is omitted.
\end{proof}

\begin{cor}\label{cor.Delta(1)_determined}
  The elements $x = \sum_{i \geq 1} [i]\cdot x_i \in \Delta(1)$ are
  determined by a choice of $x_1 \in \mathrm{ker}\,Sq^2$, a choice of
  $x_3 \in \left(Sq^1\right)^{-1}\left[x_1Sq^3\right]$, and for each
  $m \geq 1$, a choice of $x_{4m+3} \in
  \left(Sq^1\right)^{-1}\left[x_{4m-1}Sq^2Sq^3\right]$.
\end{cor}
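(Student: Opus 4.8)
The plan is to extract the statement directly from Proposition~\ref{prop.structure} by regarding the expansion $x = \sum_{i\geq 1}[i]\cdot x_i$ as a coordinate system on $\widetilde{\Gamma}_{s,*}$ and sorting the defining relations into ``explicit'' ones and ``constraint'' ones. First I would observe that relations~(\ref{eqn.Delta(1)-2}),~(\ref{eqn.Delta(1)-4}),~(\ref{eqn.Delta(1)-5}),~(\ref{eqn.Delta(1)-6}) are explicit: they express $x_2$ as $x_1Sq^1$, and, for each $m\geq 1$, express $x_{4m}$, $x_{4m+1}$, $x_{4m+2}$ as fixed composites of $Sq^1$ and $Sq^2$ applied to $x_{4m-1}$. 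By contrast, relations~(\ref{eqn.Delta(1)-1}),~(\ref{eqn.Delta(1)-3}),~(\ref{eqn.Delta(1)-7}) leave genuine freedom, confining $x_1$ to $\mathrm{ker}\,Sq^2$, $x_3$ to $(Sq^1)^{-1}[x_1Sq^3]$, and $x_{4m+3}$ to $(Sq^1)^{-1}[x_{4m-1}Sq^2Sq^3]$ but imposing nothing else.

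Next I would record the indexing remark that makes the recursion close: as $m$ ranges over $m\geq 1$, the index $4m-1$ ranges over $3,7,11,\dots$, which is precisely $\{3\}\cup\{4m+3 : m\geq 1\}$. Hence every right-hand side occurring in the explicit relations involves only $x_1$ or one of the coefficients $x_3,x_7,x_{11},\dots$; and the only constraint on each such ``free'' coefficient is relative to its immediate predecessor in the chain $x_1 \rightarrow x_3 \rightarrow x_7 \rightarrow \cdots$. Therefore, starting from $x\in\Delta(1)$, Proposition~\ref{prop.structure} shows that once $x_1$ and the sequence $(x_{4m+3})_{m\geq 0}$ are known, every remaining $x_i$ is forced, so $x$ is determined by exactly the data listed.

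For the converse I would begin with an arbitrary choice of $x_1\in\mathrm{ker}\,Sq^2$, of $x_3\in(Sq^1)^{-1}[x_1Sq^3]$, and of $x_{4m+3}\in(Sq^1)^{-1}[x_{4m-1}Sq^2Sq^3]$ for $m\geq 1$; define all remaining coefficients by~(\ref{eqn.Delta(1)-2}),~(\ref{eqn.Delta(1)-4}),~(\ref{eqn.Delta(1)-5}),~(\ref{eqn.Delta(1)-6}); and set $x=\sum_{i\geq 1}[i]\cdot x_i$. All seven relations of Proposition~\ref{prop.structure} then hold---four by construction, the remaining three because of the constraints imposed on the chosen coefficients---so the reverse direction of that proposition gives $x\in\Delta(1)$. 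Finally, since $\widetilde{\Gamma}_{s,*}$ is free on $\{[i]\}_{i\geq 1}$, the coefficients $x_1$ and $x_{4m+3}$ appear verbatim in the expansion of $x$, so distinct data yield distinct elements and the parametrization is a bijection.

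I do not expect a serious obstacle; the only points demanding attention are the indexing ``seam'' between the exceptional low-degree coefficients $x_1,x_3$ and the generic pattern indexed by $m\geq 1$, and the well-foundedness of the recursion (each forced coefficient depends only on coordinates chosen earlier in the chain). I would also note, for completeness, that the preimage sets $(Sq^1)^{-1}[\,\cdot\,]$ may be empty for a careless choice of predecessor, but this is harmless: for data coming from an actual $x\in\Delta(1)$ the forward direction of Proposition~\ref{prop.structure} guarantees these sets are nonempty, so the listed data faithfully enumerates exactly $\Delta(1)$.
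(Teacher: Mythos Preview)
Your proposal is correct and matches the paper's approach: the paper states Corollary~\ref{cor.Delta(1)_determined} without proof, treating it as an immediate reading of Proposition~\ref{prop.structure}, and your argument is precisely the unpacking of that reading---sorting the seven relations into the four ``explicit'' ones that force $x_2,x_{4m},x_{4m+1},x_{4m+2}$ and the three ``constraint'' ones that parametrize $x_1,x_3,x_{4m+3}$, then invoking both directions of the proposition. Your care with the indexing seam and with possibly empty preimage sets goes slightly beyond what the paper makes explicit, but is consistent with it.
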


\begin{rmk}
  By Prop.~\ref{prop.structure}, $x \in \Delta(1)$ if and only if $x_1
  \in \mathrm{ker}\,Sq^2$ and certain other relations hold on other
  $x_i$.  Corollary~\ref{cor.Delta(1)_determined} states that any
  choice of $x_1 \in \mathrm{ker}\, Sq^2$ can be built up into an
  element $x \in \Delta(1)$, or in other words, the choice of $x_1 \in
  \mathrm{ker}\, Sq^2$ is not restricted by any other relation.
\end{rmk}

\section{What is in $U_M(k)$?}\label{sec.questions}

The machinery of homotopy systems provides a way to categorize large
subspaces of $\mathcal{S}^+_k$-annihilated elements that are in the
image of spike squares.  In a sense, the elements of $I_M(k)$ are easy
to handle, and so our attention turns to those
$\mathcal{S}^+_k$-annihilateds that are not in $I_M(k)$.  In other
words,
\begin{question}
  For a given right $\mathcal{A}$-module $M$, what is in $U_M(k)$?
\end{question}

\begin{example}
  In the case $M = \nabla_s$, Thm.~\ref{thm.im-ker_nabla_s} shows
  $\Delta_M(k) = I_M(k)$ for all $k$.  In other words, $U_{\nabla}(k)
  = 0$ for all $k \geq 0$.  However, this seems to be an extreme case.
\end{example}

\begin{example}
  $U_{\tilde{\Gamma}}(0) = \F_2$, concentrated in bidegree $(0,0)$, as
  a consequence of Thm.~\ref{thm.im-ker_Gamma}.
\end{example}

In any unstable right $\mathcal{A}$-module $M$, there are a number of
elements of $\Delta_M(k)$ that are un-hit ({\it i.e.} fail to be in
$I_M(k)$) only because their degree is so low that the instability
condition prevents their being in the image of higher order squares.
Let us ignore those types of un-hit elements as {\it degenerate}; such
elements should be considered in $\Delta_M(j)$ for some $j < k$.  It
is much more interesting to know if there are un-hit elements in high
degrees.

\begin{example}
  It is an easy exercise to show that for $k \geq 1$, there are only
  degenerate elements in $U_{\tilde{\Gamma}}(k)_{1,*}$.  In
  particular, if $d \geq 2^{k+1}$ then $U_{\tilde{\Gamma}}(k)_{1,d} =
  0$.
\end{example}

\begin{prop}\label{prop.Sq^3_Sq^2}
  Let $x = \sum_{i \geq 1} [i] \cdot x_i \in \Delta(1)$.  Then $x \in
  I(1)$ if and only if $x_1 = wSq^2$ for some $w \in
  \mathrm{ker}\,Sq^3$.
\end{prop}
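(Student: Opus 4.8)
The plan is to exploit the structural description of $\Delta(1)$ from Proposition~\ref{prop.structure} together with Theorem~\ref{thm.im-ker_Gamma} applied to the null subspace $N_s(i,k)$, carefully handling the low-index factors $[1], [2], [3]$ separately from the higher ones. For the ``if'' direction, suppose $x_1 = wSq^2$ with $w \in \mathrm{ker}\,Sq^3$. The idea is to lift $w$ to an element $\tilde{w} = [1]\cdot w \in \widetilde{\Gamma}_{s,*}$ and produce an explicit $Sq^3$-preimage of $x$. More precisely, since $w \in \mathrm{ker}\,Sq^3$ one checks (via Propositions~\ref{prop.Sq^1} and~\ref{prop.Sq^2}, or directly via the Adem relations $Sq^3Sq^1 = 0$ and $Sq^2Sq^3 = Sq^5$) that a suitable combination involving $w$, $wSq^1$, $wSq^2$, $wSq^4$ can be assembled factor-by-factor to give an element $y$ with $ySq^3 = x$; the hypothesis $wSq^3 = 0$ is exactly what makes the would-be obstruction terms vanish. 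The cleanest route is to observe that the tail of $x$ (all $[i]\cdot x_i$ with $i \geq 4$) lies in the null subspace where the shift homotopy system applies, so Theorem~\ref{thm.imker} already furnishes a preimage there; only the initial segment $[1]\cdot x_1 + [2]\cdot x_2 + [3]\cdot x_3$ needs hand treatment, and there the hypothesis $x_1 = wSq^2$ is used directly.

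For the ``only if'' direction, suppose $x \in I(1)$, so $x = ySq^3$ for some $y \in \widetilde{\Gamma}_{s,*}$. Write $y = \sum_{j\geq 1}[j]\cdot y_j$ and expand $ySq^3$ by the Cartan formula, collecting the coefficient of $[1]$ on the left. Matching against $x = \sum_i [i]\cdot x_i$ and using $[j]Sq^p = \binom{j-p}{p}[j-p]$ in $\widetilde\Gamma_{1,*}$, one reads off $x_1$ as an explicit expression in the $y_j$ and their Steenrod operations; the relevant contributions come from $[2]Sq^1\cdot(\cdot)$, $[3]Sq^2\cdot(\cdot)$, and $[4]Sq^3\cdot(\cdot)$, etc. The goal is to show this expression can be rewritten as $wSq^2$ for some $w$, and moreover that $w$ can be chosen in $\mathrm{ker}\,Sq^3$. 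The second point is where one uses that $x \in \Delta(1)$ (not merely $x \in I(1)$): the relation $x_1 \in \mathrm{ker}\,Sq^2$ from~(\ref{eqn.Delta(1)-1}), combined with the Adem relation $Sq^2Sq^3 = Sq^5$ and $Sq^5 = $ (a term that kills the candidate $w$ against $Sq^3$), should force $wSq^3 = 0$ after the appropriate choice.

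The main obstacle I expect is the ``only if'' direction — specifically, extracting a clean formula $x_1 = wSq^2$ from the Cartan expansion of $ySq^3$ and then certifying $w \in \mathrm{ker}\,Sq^3$. The coefficient bookkeeping in $\widetilde\Gamma_{1,*}$ via binomial coefficients mod $2$ is delicate because of the low-index boundary terms (the symbols $[j]$ with $j \leq 3$ behave irregularly under $Sq^3$, as several binomials $\binom{j-3}{3}$ involve negative arguments). A secondary subtlety is that $w$ is only determined up to $\mathrm{ker}\,Sq^2$, so one must argue that \emph{some} valid choice lands in $\mathrm{ker}\,Sq^3$, rather than every choice. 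I would handle this by first doing the $s=1$ case explicitly to pin down the binomial identities, then bootstrapping to general $s$ via the factorization $x = \sum_i [i]\cdot x_i$ exactly as in Section~\ref{sec.structure}.
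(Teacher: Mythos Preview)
Your overall strategy---isolate the low-index part and treat the tail separately---matches the paper, but the execution you propose for the ``if'' direction has a real gap. You want to apply Theorem~\ref{thm.imker} (via the null subspace $N_s(1,1)$) to the tail $\sum_{i\geq 4}[i]\cdot x_i$. That theorem requires the element to lie in $N \cap \Delta(1)$, not merely in $N$. But the tail is \emph{not} in $\Delta(1)$ in general: using Prop.~\ref{prop.structure}, one computes that $\bigl([1]x_1+[2]x_2+[3]x_3\bigr)Sq^1 = [3]\cdot x_3Sq^1 = [3]\cdot x_1Sq^3$, which is typically nonzero, so neither head nor tail is $Sq^1$-annihilated on its own. (Cutting at $i\geq 2$ fails for the same reason.) Thus the homotopy-system theorem does not apply to the tail, and this step would collapse as written.

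The paper avoids this by never invoking Theorem~\ref{thm.imker} here. Instead it writes down an \emph{explicit} $Sq^3$-preimage for the entire tail $x' = x - [1]\cdot x_1$, namely $y' = \sum_{j\geq 2}[2j]\cdot x_{2j-3}$, and checks $y'Sq^3 = x'$ directly from the relations of Prop.~\ref{prop.structure}. This works whether or not $x'$ itself is $\mathcal{S}^+_1$-annihilated. The reduction therefore becomes: $x\in\mathrm{im}\,Sq^3$ iff $[1]\cdot x_1\in\mathrm{im}\,Sq^3$.

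This reduction also streamlines the ``only if'' direction and dissolves the difficulty you flagged. Once you are asking only for a preimage of $[1]\cdot x_1$, you may look at preimages of the form $[1]\cdot y_1 + [2]\cdot y_2$; computing $\bigl([1]y_1+[2]y_2\bigr)Sq^3 = [1]\cdot(y_1Sq^1+y_2)Sq^2 + [2]\cdot y_2Sq^3$ immediately gives $w = y_1Sq^1+y_2$ with $wSq^2 = x_1$ and $wSq^3 = y_2Sq^3 = 0$. Conversely, given such a $w$, take $y_1=0$, $y_2=w$. There is no need to chase contributions from $y_3, y_4, \ldots$ or to worry about the ambiguity of $w$ modulo $\ker Sq^2$; the reduction to $[1]\cdot x_1$ has already absorbed all of that. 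Your proposed bootstrap from $s=1$ is unnecessary.
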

\begin{proof}
  Let $x \in \Delta(1)_{s,d}$, for some $s \geq 2$.  It suffices to
  prove: $x \in \mathrm{im}\,Sq^3$ if and only if $x_1 = wSq^2$ for
  some $w \in \mathrm{ker}\, Sq^3$.  Write $x = [1] \cdot x_1 + x'$.
  I claim $x' \in \mathrm{im}\,Sq^3$; indeed, there is a preimage:
  \[
    y' = \sum_{j \geq 2} [2j] \cdot x_{2j-3}.
  \]
  It is straightforward to verify that $y'Sq^3 = x'$, using
  Prop.~\ref{prop.structure}.  Thus, it suffices to prove: $[1] \cdot
  x_1 \in \mathrm{im}\, Sq^3$ if and only if $x_1 = wSq^2$ for some $w
  \in \mathrm{ker}\, Sq^3$.

  A $Sq^3$-preimage of $[1] \cdot x_1$ could only have the form $[1]
  \cdot y_1 + [2] \cdot y_2$ for some $y_1, y_2 \in
  \widetilde{\Gamma}_{s-1,*}$.  Consider the effect of applying $Sq^3$
  on this element:
  \begin{eqnarray*}
    \left( [1] \cdot y_1 + [2] \cdot y_2 \right)Sq^3 &=&
    [1] \cdot (y_1Sq^3 + y_2Sq^2) + [2] \cdot y_2Sq^3\\
    &=& [1] \cdot (y_1Sq^1 + y_2)Sq^2 + [2] \cdot y_2Sq^3.
  \end{eqnarray*}
  Thus we find that for $w = y_1Sq^1 + y_2$, $[1] \cdot x_1 \in
  \mathrm{im} \, Sq^3$ implies $wSq^2 = x_1$, and $wSq^3 = y_2Sq^3 =
  0$.  Conversely, if any $w$ exists such that $wSq^2 = x_1$ and
  $wSq^3 = 0$, then $([2]\cdot w)Sq^3 = [1] \cdot x_1$.
\end{proof}

\begin{cor}
  There are only degenerate elements in $U_{\tilde{\Gamma}}(1)_{2,*}$.
\end{cor}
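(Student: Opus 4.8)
The plan is to reduce to Prop.~\ref{prop.Sq^3_Sq^2} and then carry out a short computation with mod-$2$ binomial coefficients in $\widetilde{\Gamma}_{1,*}$, using two facts: (a) $\widetilde{\Gamma}_{1,e}$ is spanned by the single element $[e]$ when $e \geq 1$ and is $0$ when $e \leq 0$; and (b) as recorded in Section~\ref{sec.shift}, the squaring action on $\widetilde{\Gamma}_{1,*}$ satisfies $[a]Sq^{2^n} = \binom{a-2^n}{2^n}[a-2^n]$ (with $[e]$ read as $0$ for $e \leq 0$). Fix $d \geq 4$ and take $x \in \Delta(1)_{2,d}$; we must show $x \in I(1)$. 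Writing $x = \sum_{i \geq 1}[i]\cdot x_i$ with $x_i \in \widetilde{\Gamma}_{1,d-i}$ as in Section~\ref{sec.structure}, Prop.~\ref{prop.structure} gives $x_1 \in \ker Sq^2 \subseteq \widetilde{\Gamma}_{1,d-1}$; by (a), either $x_1 = 0$ or $x_1 = [d-1]$ with $[d-1] \in \ker Sq^2$. By Prop.~\ref{prop.Sq^3_Sq^2} it now suffices to produce $w \in \ker Sq^3$ with $wSq^2 = x_1$; such a $w$ necessarily lies in $\widetilde{\Gamma}_{1,d+1}$, so by (a) it is either $0$ or $[d+1]$.

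If $x_1 = 0$, take $w = 0$. If $x_1 = [d-1]$, then $0 = [d-1]Sq^2 = \binom{d-3}{2}[d-3]$ with $d-3 \geq 1$, so $[d-3] \neq 0$ and~(\ref{eqn.binom}) forces $d-3 \equiv 0, 1 \pmod 4$, that is, $d \equiv 0$ or $3 \pmod 4$. I claim $w = [d+1]$ works. First, $wSq^2 = \binom{d-1}{2}[d-1]$, and since $d-1 \equiv 2, 3 \pmod 4$ we have $\binom{d-1}{2} = 1$ by~(\ref{eqn.binom}), so $wSq^2 = [d-1] = x_1$. Second, using the Adem relation $Sq^1 Sq^2 = Sq^3$ we get $wSq^3 = ([d+1]Sq^1)Sq^2$; here $[d+1]Sq^1 = \binom{d}{1}[d]$, which is $0$ when $d$ is even, while when $d$ is odd (so $d \equiv 3 \pmod 4$) it equals $[d]$ and then $wSq^3 = [d]Sq^2 = \binom{d-2}{2}[d-2] = 0$ because $d-2 \equiv 1 \pmod 4$. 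In all cases $w \in \ker Sq^3$ and $wSq^2 = x_1$, whence $x \in I(1)$ by Prop.~\ref{prop.Sq^3_Sq^2}.

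This gives $U_{\tilde{\Gamma}}(1)_{2,d} = \Delta(1)_{2,d}/I(1)_{2,d} = 0$ for every $d \geq 4$, and the only classes left sit in bidegrees $(2,2)$ and $(2,3)$, where $d < 2^{k+1} = 4$, hence are degenerate. I do not expect a real obstacle: the whole argument is bookkeeping with mod-$2$ binomial coefficients and the one-dimensionality of each $\widetilde{\Gamma}_{1,e}$. The one step that needs genuine care is the verification, in the second paragraph, that the \emph{forced} candidate $w = [d+1]$ solving $wSq^2 = x_1$ automatically lies in $\ker Sq^3$; this is exactly where the congruence $d \equiv 0, 3 \pmod 4$ coming from $x_1 \in \ker Sq^2$ is consumed.
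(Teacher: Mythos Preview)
Your proof is correct and follows essentially the same line as the paper's: reduce via Prop.~\ref{prop.Sq^3_Sq^2} to the question of whether $x_1 \in \ker Sq^2 \subseteq \widetilde{\Gamma}_{1,*}$ can be written as $wSq^2$ with $w \in \ker Sq^3$, then verify this by direct computation with mod-$2$ binomial coefficients. The paper phrases the key computation as ``$\ker Sq^2 = \mathrm{im}\,Sq^2$ in $\widetilde{\Gamma}_{1,d}$ for $d \geq 2$, and every $Sq^2$-preimage (namely $[4j]$ or $[4j+1]$) is killed by $Sq^3$,'' whereas you unwind the same facts explicitly via the congruence $d \equiv 0, 3 \pmod 4$; the content is identical.
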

\begin{proof}
  By Cor.~\ref{cor.Delta(1)_determined} all elements of
  $\Delta(1)_{2,*}$ are determined by a choice of $x_1 \in
  \mathrm{ker}\,Sq^2$, together with choices for $x_{4m-1}$, $m \geq
  1$.  By Prop.~\ref{prop.Sq^3_Sq^2}, it suffices to examine only the
  first term $[1] \cdot x_1$ of an element $x \in \Delta(1)$.  Now
  $x_1 \in \widetilde{\Gamma}_{1,*}$, and it is easy to see that
  $\mathrm{ker}\, Sq^2 = \mathrm{im}\, Sq^2$ in
  $\widetilde{\Gamma}_{1,d}$ for $d \geq 2$.  Furthermore, all
  preimages of $Sq^2$ in $\widetilde{\Gamma}_{1,*}$ (namely, elements
  of the form $[4j]$ and $[4j+1]$ for $j \geq 1$), are killed by
  $Sq^3$.
\end{proof}

Note that Theorem~\ref{thm.imker} can only be used to show particular
elements $x \in \Delta_M(k)$ are trivial in $U_M(k)$, not to detect
non-trivial elements, and so it may be very difficult to understand
the true nature of $U_M(k)$.  Initially we found no non-trivial
non-degenerate elements of $U_{\tilde{\Gamma}}(k)$.  However, using
Prop.~\ref{prop.structure}, Cor.~\ref{cor.Delta(1)_determined}, and
Prop.~\ref{prop.Sq^3_Sq^2}, together with machine computations, a
non-trivial element $\overline{z} \in U_{\tilde{\Gamma}}(1)_{5,9}$ can
be produced.

\begin{cor}\label{cor.unhit_z}
  Define:
  \begin{eqnarray*}
    z &=& [ 1, 1, 1, 2, 4 ] + [ 1, 1, 2, 1, 4 ] + [ 1, 1, 2, 4, 1 ] +
    [1, 2, 1, 4, 1 ]+ [ 1, 2, 2, 2, 2 ]\\ && {} + [ 1, 4, 1, 1, 2 ] +
    [ 1, 4, 2, 1, 1 ]+ [ 2, 1, 1, 2, 3 ] + [ 2, 1, 2, 1, 3 ] + [ 2, 1,
      2, 2, 2 ]\\ && {} + [ 2, 1, 2, 3, 1 ]+ [ 2, 2, 1, 2, 2 ] + [ 2,
      2, 1, 3, 1 ] + [ 2, 2, 2, 1, 2 ]+ [ 2, 2, 2, 2, 1 ]\\ && {} + [
      2, 3, 1, 1, 2 ]+ [ 2, 3, 2, 1, 1 ]+ [ 3, 1, 2, 1, 2 ] + [ 3, 1,
      2, 2, 1 ]+ [ 3, 2, 2, 1, 1 ]\\ && {} + [ 4, 1, 1, 1, 2 ] + [ 4,
      1, 1, 2, 1 ]+ [ 4, 1, 2, 1, 1 ] + [ 4, 2, 1, 1, 1 ] + [ 5, 1, 1,
      1, 1 ].
  \end{eqnarray*}
  The element $z \in \widetilde{\Gamma}_{5,9}$ represents a
  non-trivial element of $U_{\tilde{\Gamma}}(1)$.
\end{cor}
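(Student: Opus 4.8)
The plan is to verify the two conditions of Proposition~\ref{prop.Sq^3_Sq^2} in reverse: namely, to show that $z$ as written does lie in $\Delta(1)_{5,9}$, and that its ``first coordinate'' $x_1$ cannot be written as $wSq^2$ for any $w \in \mathrm{ker}\,Sq^3$. Concretely, first I would verify $z \in \Delta(1)$ by a direct computation: apply $Sq^1$ and $Sq^2$ to the given sum of $25$ monomials using the Cartan formula together with the single-variable action $\gamma_a Sq^i = \binom{a-i}{i}\gamma_{a-i}$, and check that both results vanish. This is purely mechanical and is exactly the kind of thing the machine computation alluded to in the text would confirm; in the write-up I would simply assert that this check has been performed (as is done for analogous verifications earlier, e.g.\ the reverse direction of Prop.~\ref{prop.structure}). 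Having confirmed $z \in \Delta(1)$, the entire content of the corollary reduces, via Prop.~\ref{prop.Sq^3_Sq^2}, to analyzing the decomposition $z = \sum_{i \geq 1}[i]\cdot z_i$ and showing that $z_1 \neq wSq^2$ for every $w$ with $wSq^3 = 0$.

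The second step is to extract $z_1 \in \widetilde{\Gamma}_{4,8}$: collect all monomials in the list that begin with the symbol $1$ and strip off that leading $\gamma_1$. Reading off the displayed expression, the monomials starting with $1$ are $[1,1,1,2,4]$, $[1,1,2,1,4]$, $[1,1,2,4,1]$, $[1,2,1,4,1]$, $[1,2,2,2,2]$, $[1,4,1,1,2]$, $[1,4,2,1,1]$, so that
\[
  z_1 = [1,1,2,4] + [1,2,1,4] + [1,2,4,1] + [2,1,4,1] + [2,2,2,2] + [4,1,1,2] + [4,2,1,1]
\]
in $\widetilde{\Gamma}_{4,8}$. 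I would then need to show $z_1 \notin \{ wSq^2 : w \in \mathrm{ker}\,Sq^3\}$. The natural approach is a finite linear-algebra computation over $\F_2$ in the bidegree $(4,*)$ range: the source $\widetilde{\Gamma}_{4,10}$, the operator $Sq^2 : \widetilde{\Gamma}_{4,10} \to \widetilde{\Gamma}_{4,8}$ restricted to $\mathrm{ker}\bigl(Sq^3 : \widetilde{\Gamma}_{4,10} \to \widetilde{\Gamma}_{4,7}\bigr)$, and the question of whether $z_1$ is in the image. One sets up the matrices of $Sq^2$ and $Sq^3$ on the monomial bases of $\widetilde{\Gamma}_{4,10}$, computes $\mathrm{ker}\,Sq^3$, applies $Sq^2$ to a spanning set of that kernel, and row-reduces together with $z_1$ to see that the rank strictly increases — i.e.\ $z_1$ is not in the span. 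In the paper this is presented as the output of the ``machine computations'' already cited, so in the LaTeX I would state the dimensions involved and record that $z_1$ fails to be $Sq^2$ of any $Sq^3$-annihilated element, hence by Prop.~\ref{prop.Sq^3_Sq^2}, $z \notin I(1)$, and therefore $\overline{z} \neq 0$ in $U_{\tilde\Gamma}(1)_{5,9}$.

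The main obstacle is genuinely computational rather than conceptual: both the verification $z \in \Delta(1)$ and, especially, the non-solvability of $z_1 = wSq^2$, $wSq^3 = 0$ require careful bookkeeping over a moderately large monomial basis (the spaces $\widetilde{\Gamma}_{4,8}$, $\widetilde{\Gamma}_{4,10}$, $\widetilde{\Gamma}_{5,9}$ each have many generators), and there is no slick structural shortcut — indeed the whole point of the example is that Theorem~\ref{thm.imker} and its corollaries can certify membership in $I_M(k)$ but cannot certify non-membership, so one is forced into an explicit linear-algebra obstruction. Accordingly I would keep the write-up short: invoke Prop.~\ref{prop.Sq^3_Sq^2}, state that $z \in \Delta(1)$ (routine Cartan-formula check), exhibit $z_1$ explicitly as above, and assert — with a pointer to the machine computation — that $z_1$ lies outside $Sq^2(\mathrm{ker}\,Sq^3)$ in $\widetilde{\Gamma}_{4,8}$, which completes the proof.
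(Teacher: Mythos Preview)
Your proposal is correct and follows the same overall strategy as the paper --- reduce via Prop.~\ref{prop.Sq^3_Sq^2} to a statement about $z_1$, and settle that by machine linear algebra --- but the paper streamlines both halves in ways worth noting. First, rather than verifying $z \in \Delta(1)$ by a direct Cartan-formula computation on the $25$ monomials, the paper argues in the reverse direction: it begins with $w = z_1$, checks by hand that $w \in \mathrm{ker}\,Sq^2$, and then invokes Prop.~\ref{prop.structure} and Cor.~\ref{cor.Delta(1)_determined} to assert that $w$ can be extended to an element $z \in \Delta(1)$ with first coordinate $w$; membership in $\Delta(1)$ is then guaranteed by construction rather than by verification. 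Second, for the obstruction step the paper checks the stronger (and computationally simpler) condition $w \notin \mathrm{im}\,Sq^2$ outright, rather than your $w \notin Sq^2(\mathrm{ker}\,Sq^3)$; since the former implies the latter, Prop.~\ref{prop.Sq^3_Sq^2} still applies, but one avoids having to first compute $\mathrm{ker}\,Sq^3 \subseteq \widetilde{\Gamma}_{4,10}$. Your route works, but the paper's version replaces a large mechanical check with a structural one and shrinks the linear-algebra problem.
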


\begin{proof}
  Let $w = [1, 1, 2, 4 ] + [ 1, 2, 1, 4 ] + [ 1, 2, 4, 1 ] + [2, 1, 4,
    1 ]+ [ 2, 2, 2, 2 ] + [ 4, 1, 1, 2 ] + [ 4, 2, 1, 1 ]$.  It can be
  verified by hand that $wSq^2 = 0$ and by computer that $w \notin
  \mathrm{im}\,Sq^2$.  By Prop.~\ref{prop.structure} and
  Cor.~\ref{cor.Delta(1)_determined}, $w \in \mathrm{ker}\,Sq^2$ can
  be used to produce an element $z = \sum_{i \geq 1} [i] \cdot z_i \in
  \Delta(1)$ with $z_1 = w$.  Such element $z$ cannot be in the image
  of $Sq^3$ due to Prop.~\ref{prop.Sq^3_Sq^2} (a fact that can be
  independently checked by computer as well).
\end{proof}

\begin{rmk}
  The proof of Cor.~\ref{cor.unhit_z} hinges on the existence of an
  element $w \in \mathrm{ker}\,Sq^2 \setminus \mathrm{im}\, Sq^2$.
  Even though $Sq^2$ does not act as a differential on
  $\widetilde{\Gamma}_{s,*}$ for $s \geq 2$, it is interesting to see
  how closely related $\mathrm{ker} \,Sq^2$ and $\mathrm{im} \,Sq^2$
  really are.  We do not expect $\mathrm{im} \,Sq^2 \subseteq
  \mathrm{im}\, Sq^2$, however it is hard to find counterexamples to
  $\mathrm{ker}\, Sq^2 \subseteq \mathrm{im} \,Sq^2$.  The author
  hopes to explore this relationship in future work.
\end{rmk}


\bibliographystyle{plain}

\end{document}